\newcommand{\const}{\mathrm{const}}
\newcommand{\Tr}{\mathrm{Tr}}
\newcommand{\Arg}{\mathrm{Arg}}
\newcommand{\Ei}{\mathrm{Ei}}
\renewcommand{\r}{^{ \mathrm{r} }}
\renewcommand{\Im}{\mathrm{Im}}
\renewcommand{\Re}{\mathrm{Re}}
\newtheorem{lemma}{Lemma}
\newtheorem{theorem}{Theorem}
\newtheorem{statement}{Statement}
\numberwithin{equation}{section}
\numberwithin{lemma}{section}
\numberwithin{statement}{section}
\numberwithin{definition}{section}
\begin{document}

\begin{center}
\large \textbf{Absence of sufficiently localized traveling wave solutions for the Novikov-Veselov equation at zero energy}
\end{center}

\begin{center}
A.V. Kazeykina \footnote{Centre des Math\'ematiques Appliqu\'ees, Ecole Polytechnique, Palaiseau, 91128, France; \\ email: kazeykina@cmap.polytechnique.fr}
\end{center}

\textbf{Abstract.} We demonstrate that the Novikov--Veselov equation (a $ ( 2 + 1 ) $-dimensional analog of KdV) at zero energy does not possess solitons with the space localization stronger than $ O( | x |^{ -4 } ) $ as $ | x | \to \infty $.

\section{Introduction}
In this article we consider the Novikov-Veselov equation
\begin{subequations}
\label{NV}
\begin{align}
\label{NV_eq}
& \partial_t v = 4 \Re ( 4 \partial_z^3 v + \partial_z( v w ) - E \partial_{ z } w ), \\
\label{w_def}
& \partial_{ \bar z } w = - 3 \partial_z v, \quad v = \bar v, \text{ i.e. } v \text{ is a real-valued function, } \quad E \in \mathbb{R},\\
\label{misc}
& v = v( x, t ), \quad w = w( x, t ), \quad x = ( x_1, x_2 ) \in \mathbb{R}^2, \quad t \in \mathbb{R},
\end{align}
\end{subequations}
where
\begin{equation}
\label{derivatives}
\partial_t = \frac{ \partial }{ \partial t }, \quad \partial_z = \frac{ 1 }{ 2 } \left( \frac{ \partial }{ \partial x_1 } - i \frac{ \partial }{ \partial x_2 } \right), \quad \partial_{ \bar z } = \frac{ 1 }{ 2 } \left( \frac{ \partial }{ \partial x_1 } + i \frac{ \partial }{ \partial x_2 } \right).
\end{equation}

From the mathematical point of view equation (\ref{NV}) is the most natural $ ( 2 + 1 ) $-dimensional analog of the classic Korteweg-de Vries equation. When $ v = v( x_1, t ) $, $ w = w( x_1, t ) $, equation (\ref{NV}) reduces to KdV. Besides, equation (\ref{NV}) is integrable via the scattering transform for the Schr\"odinger equation on the plane
\begin{equation}
\label{t_schrodinger}
\begin{aligned}
& L \psi = E \psi, \quad L = - \Delta + v( x, t ), \\
& x \in \mathbb{R}^2, \quad \Delta = \frac{ \partial^2 }{ \partial x_1^2 } + \frac{ \partial^2 }{ \partial x_2^2 }, \quad  E = E_{fixed}.
\end{aligned}
\end{equation}
Note also that when $ E \to \pm \infty $, equation (\ref{NV}) transforms into another renowned $ ( 2 + 1 ) $-dimensional analog of KdV, Kadomtsev-Petviashvili equation (KP-I and KP-II, respectively). In addition, a dispersionless analog of (\ref{NV}) at $ E = 0 $ was derived in \cite{KM} in the framework of a geometrical optics model.

Equation (\ref{NV}) is contained implicitly in \cite{M} as an equation possessing the following representation
\begin{equation}
\label{L-A-B}
\frac{ \partial( L - E ) }{ \partial t } = [ L - E, A ] + B( L - E )
\end{equation}
(Manakov $ L - A - B $ triple), where $ L $ is the operator of the corresponding scattering problem, $ A $, $ B $ are some appropriate differential operators and $ [ \cdot, \cdot ] $ denotes the commutator. For the particular case of the $ 2 $-dimensional Schr\"odinger operator as in (\ref{t_schrodinger}) the explicit form of a third-order differential operator $ A $ and a zero-order differential operator $ B $
\begin{equation}
\label{A-B}
\begin{array}{l}
A = - 8 \partial_{ z }^3 - 2 w \partial_{ z } - 8 \partial_{ \bar z }^3 - 2 \bar w \partial_{ \bar z }, \\
B = 2 \partial_{ \bar z } w + 2 \partial_{ \bar z } \bar w,
\end{array} \text{ where } w \text{ is defined via (\ref{w_def})},
\end{equation}
as well as the corresponding evolution equation (\ref{NV}) and its higher-order analogs were given in \cite{NV1}, \cite{NV2}.

In this article we are concerned with equation (\ref{NV}) at zero energy $ E = 0 $. We consider its regular, sufficiently localized solutions satisfying the following conditions
\begin{align}
\label{smoothness}
& \bullet v, w \in C( \mathbb{R}^2 \times \mathbb{R} ), \; v( \cdot, t ) \in C^3( \mathbb{R}^2 ) \quad \forall t \in \mathbb{R}; \\
\label{decrease}
& \bullet | \partial_{ x }^{ j } v( x, t ) | \leqslant \frac{ q( t ) }{ ( 1 + | x | )^{ 4 + \varepsilon } }, \; j = ( j_1, j_2 ) \in ( \mathbb{N} \cup 0 )^2, \; j_1 + j_2 \leqslant 3, \text{ for some } q( t ) > 0, \varepsilon > 0; \\
\label{w_decrease}
& \bullet | w( x, t ) | \to 0, \text{ when } | x | \to \infty, \quad t \in \mathbb{R}.
\end{align}
Our main concern will be the question of existence and absence of solitons for the Novikov-Veselov equation at zero energy. We say that a solution of (\ref{NV}) is a soliton if $ v( x, t ) = V( x - ct ) $ for some $ c = ( c_1, c_2 ) \in \mathbb{R}^2 $.

Solitons and the large time asymptotic behavior of sufficiently localized in space solutions for the Novikov-Veselov equation (\ref{NV}) were studied in the series of works \cite{GN1, G, N2, K1, KN1, KN2, KN3}. In \cite{KN1, K1} it was shown that in the regular case, i.e. when the scattering data are nonsingular at fixed nonzero energy (and for the reflectionless case at positive energy), related solutions of (\ref{NV}) do not contain isolated solitons in the large time asymptotics. In the general case it was shown in \cite{N2, KN3} that the Novikov-Veselov equation (\ref{NV}) at nonzero energy does not admit exponentially localized solitons. This result was improved in \cite{K3} where it was shown that the Novikov-Veselov equation (\ref{NV}) at nonzero energy does not possess solitons decaying as $ O\left( | x |^{ - 3 - \varepsilon } \right) $, $ \varepsilon > 0 $, for $ | x | \to \infty $. A family of algebraically localized solitons for the Novikov-Veselov equation at positive energy was constructed in \cite{G} (see also discussion in \cite{KN2}). These solitons are rational functions decaying as $ O\left( | x |^{ - 2 } \right) $ when $ | x | \to \infty $. For the Novikov-Veselov equation at zero energy the absence of solitons of conductivity type was proved in \cite{K2}.

The main result of the present article consists in the following theorem.
\begin{theorem}
\label{main_theorem}
Let $ (v, w) $ be a soliton solution of (\ref{NV}) with $ E = 0 $ satisfying properties (\ref{smoothness})-(\ref{w_decrease}). Then $ v \equiv 0 $, $ w \equiv 0 $.
\end{theorem}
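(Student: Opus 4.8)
The plan is to exploit the integrability of (\ref{NV}) via the scattering transform for the two-dimensional Schr\"odinger operator (\ref{t_schrodinger}) at $E=0$, and then to play the soliton (translation) law for the scattering data against the Novikov--Veselov evolution law. First I would pass to the moving frame: writing $v(x,t)=V(x-ct)$, $w(x,t)=W(x-ct)$ and $\tilde c=c_1+ic_2$, the identity $\partial_t v=-2\Re(\tilde c\,\partial_z V)$ turns (\ref{NV_eq}) at $E=0$ into the stationary relation
\begin{equation*}
\Re\,\partial_z\bigl(16\,\partial_z^2 V+4\,V W+2\tilde c\,V\bigr)=0,
\end{equation*}
coupled with $\partial_{\bar z}W=-3\partial_z V$. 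Next I would introduce, for the real time-dependent potential $v(\cdot,t)$, the zero-energy Faddeev eigenfunctions $\psi=e^{\lambda z}\mu(z,\lambda)$ with $\mu\to1$ as $|z|\to\infty$, which satisfy $\partial_{\bar z}(\partial_z+\lambda)\mu=\tfrac14 v\,\mu$, together with the associated $\bar\partial$-scattering datum $r(\lambda)$ defined through the $\bar\partial_\lambda$-equation obeyed by $\mu$. The hypotheses (\ref{smoothness})--(\ref{decrease}), and in particular the decay rate $|x|^{-4-\varepsilon}$, are exactly what make this transform well defined and $r$ continuous: the threshold $4$ guarantees finiteness of the second moments of $v$ and hence enough regularity of $r$, including at the degenerate point $\lambda=0$.

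The heart of the argument is a comparison of two transformation laws for $r(\lambda,t)$, the datum of $v(\cdot,t)$. On one hand, the translation structure of a soliton gives that under $v\mapsto v(\cdot-x_0)$ the datum transforms by a pure phase; with $x_0=ct$ and $z_0=\tilde c\,t$ one checks
\begin{equation*}
r(\lambda,t)=e^{\,\lambda z_0-\bar\lambda\bar z_0}\,r(\lambda,0)=e^{\,T(\lambda)\,t}\,r(\lambda,0),\qquad T(\lambda)=\lambda\tilde c-\bar\lambda\bar{\tilde c},
\end{equation*}
so that $T$ is purely imaginary and homogeneous of degree $1$ in $(\lambda,\bar\lambda)$. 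On the other hand, the $L$--$A$--$B$ representation (\ref{L-A-B})--(\ref{A-B}) linearizes the Novikov--Veselov flow on the scattering datum,
\begin{equation*}
r(\lambda,t)=e^{\,\Omega(\lambda)\,t}\,r(\lambda,0),
\end{equation*}
where, by the scaling $v(x)\mapsto s^2v(sx)$, $t\mapsto s^{-3}t$, $\lambda\mapsto s\lambda$ that leaves the zero-energy equation invariant, $\Omega$ must be a non-zero purely imaginary polynomial, \emph{homogeneous of degree} $3$ in $(\lambda,\bar\lambda)$. Deriving these two laws rigorously---especially verifying the evolution law at $E=0$ and under merely algebraic decay---is the technical core.

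Granting the two laws, the conclusion is short. For each fixed $\lambda$ the equality $e^{T(\lambda)t}r(\lambda,0)=e^{\Omega(\lambda)t}r(\lambda,0)$ must hold for all $t$, so either $r(\lambda,0)=0$ or $\Omega(\lambda)=T(\lambda)$. Since $\Omega$ and $T$ are homogeneous of different degrees and $\Omega\not\equiv0$, the difference $\Omega-T$ is a non-zero real-analytic function, so its zero set $\{\Omega=T\}$ has Lebesgue measure zero and empty interior in the $\lambda$-plane. Hence $r(\cdot,0)$ is supported on a set with empty interior, and being continuous it vanishes identically, $r\equiv0$. Then the $\bar\partial$-equation forces $\partial_{\bar\lambda}\mu\equiv0$, so $\mu(z,\cdot)$ is entire in $\lambda$, bounded, and tends to $1$ at infinity; by Liouville $\mu\equiv1$, and substituting into $\partial_{\bar z}(\partial_z+\lambda)\mu=\tfrac14 v\mu$ gives $v\equiv0$. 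Finally $\partial_{\bar z}W=-3\partial_z V=0$ makes $w$ holomorphic, and $|w|\to0$ at infinity (\ref{w_decrease}) forces $w\equiv0$ by Liouville again.

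I expect the main obstacle to be the construction and regularity theory of the zero-energy scattering transform under the critical decay $|x|^{-4-\varepsilon}$, rather than the matching argument itself. Zero energy is the degenerate case of Faddeev's theory: the Green's function for $\partial_{\bar z}(\partial_z+\lambda)$ is borderline, the eigenfunction $\mu$ and the datum $r$ may be singular at $\lambda=0$, and one must check that $r$ is genuinely continuous there and that the law $r(\lambda,t)=e^{\Omega(\lambda)t}r(\lambda,0)$ survives without stronger localization. This is precisely where the moment conditions implied by $|x|^{-4-\varepsilon}$ enter, and it is the step I would expect to demand the most care; the exponent $4$ in the statement should be read as the natural threshold for this scattering theory to function.
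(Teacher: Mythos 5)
Your overall skeleton — compare the degree-one translation law of a soliton's scattering data with the degree-three Novikov--Veselov evolution law, conclude the data vanish, then apply Liouville to the Faddeev eigenfunction — is indeed the skeleton of the paper's proof, and your homogeneity/matching step coincides with the paper's treatment of the datum $ b\r_1 $. However, there are two genuine gaps, both specific to zero energy, and both concern exactly the machinery you deferred as ``technical.''

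First, at $ E = 0 $ there is no single scattering datum $ r $ whose vanishing makes $ \mu $ holomorphic in $ \lambda $. The $ \bar\partial_\lambda $-equation at zero energy is (\ref{mu_dbar}): the right-hand side involves \emph{three} data $ b\r_1 $, $ d\r_1 $, $ a\r_1 $, and even the second eigenfunction $ \overline{ \mu\r_2 } $ (the solution growing like $ z $, introduced in \cite{BLMP1}). Killing your $ r $ (the analog of $ b\r_1 $) by the homogeneity argument leaves the $ a\r_1 $- and $ d\r_1 $-terms alive, so holomorphy of $ \mu $ does not follow. This is precisely why the paper works with the whole family (\ref{s_data}) of seven data attached to the growing solutions $ \mu\r_1, \mu\r_2, \mu\r_3 $, and why most of its proof (Lemmas \ref{shift_lemma}, \ref{dyn_lemma} and the chain (\ref{b_null})--(\ref{alphac_null})) is devoted to successively killing $ b\r_1, d\r_1, a\r_1, a\r_2, c\r_1 $ and deducing $ \hat v( 0 ) = 0 $. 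Note also that your scaling heuristic only produces the exponential law for the Faddeev-type datum; the higher data evolve \emph{inhomogeneously} (see (\ref{c1_evolution}), (\ref{c2_evolution}), (\ref{a3_evolution}), with terms linear and quadratic in $ t $), and it is these inhomogeneous terms, played against the translation laws (\ref{c2_zeta}), (\ref{a3_zeta}), that force $ a\r_1 = a\r_2 = c\r_1 = 0 $. No counterpart of this exists in your proposal.

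Second, your Liouville step presumes that $ \mu( z, \cdot ) $ exists, is bounded, and that $ r $ is continuous on all of $ \mathbb{C} $. But the eigenfunctions are defined only for $ \lambda $ outside the zero set $ \mathcal{E}\r $ of the modified Fredholm determinant, and the decay hypothesis (\ref{decrease}) does not exclude $ \mathcal{E}\r \neq \varnothing $ (no smallness of $ v $ is assumed); moreover the unregularized Green's function (\ref{green}) has a logarithmic singularity in $ \lambda $ at $ \lambda = 0 $, so your unregularized $ \mu $ need not exist near $ \lambda = 0 $ however fast $ v $ decays — one needs the regularization (\ref{regg_def}). The paper closes this hole with a separate argument you do not have: after all the data vanish, the $ \bar\partial $-equation (\ref{dif_det}) shows $ \Delta\r $ is holomorphic off $ \mathcal{E}\r \cup 0 $ (this step itself \emph{consumes} the vanishing of $ a\r_1 $, $ a\r_2 $, $ c\r_1 $ and $ \hat v( 0 ) $, another reason the higher data are indispensable); realness, continuity and $ \Delta\r \to 1 $ (Statement \ref{delta_prop}) then force $ \mathcal{E}\r = \varnothing $ by a contradiction at a maximal-modulus point of $ \mathcal{E}\r $, and only then can Liouville be applied to $ \mu\r_1 $. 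Your closing observation about $ w $ (holomorphic by (\ref{w_def}) once $ v \equiv 0 $, decaying at infinity, hence $ w \equiv 0 $) is correct and matches what the paper leaves implicit.
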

The proof is based on the ideas proposed in \cite{N2, K2} and exploits the regularization of Faddeev solutions and some special scattering data introduced in \cite{BLMP1}.

Note that KP-I equation possesses soliton solutions and these solutions decay as $ O\left( | x |^{ - 2 } \right) $ when $ | x | \to \infty $. By contrast, KP-II does not possess sufficiently localized soliton solutions. For the results on existence and nonexistence of localized soliton solutions of KP-I, KP-II and their generalized versions see \cite{BS1}; the symmetry properties and the decay rates of these solutions were derived in \cite{BS2}.

For more results on integrable $ ( 2 + 1 ) $-dimensional systems admitting localized soliton solutions, see \cite{AC,BLMP2,FA,FS} and references therein. In particular, for more results on the behavior of solutions for the Novikov-Veselov equation at zero energy see \cite{LMS,TT, Ch, P}.

The present paper is organized as follows. In Section \ref{schr_section} we recall, in particular, some known notions and results from the direct and inverse scattering theory for the two-dimensional Schr\"odinger equation at zero energy (see \cite{GN2,N1,BLMP1} for more details). Our main result (namely, Theorem \ref{main_theorem}) is proved in Section \ref{main_section}. Section \ref{lemmas_section} contains the proofs of some preliminary lemmas formulated in Section \ref{main_section}. A detailed derivation of some of the formulas used in the paper and proofs of some auxiliary statements can be found in the Appendix.

This work was fulfilled in the framework of research carried out under the supervision of R.G. Novikov.

\section{Direct scattering for the $ 2 $-dimensional Schr\"odinger equation at zero energy}
\label{schr_section}
Consider the $ 2 $-dimensional Schr\"odinger equation at zero energy
\begin{equation}
\label{schrodinger}
\begin{aligned}
& L \psi = 0, \quad L = - \Delta + v, \quad \Delta = 4 \partial_z \partial_{ \bar z }, \\
& v = v( z ), \quad z = x_1 + i x_2, \quad x_1, x_2 \in \mathbb{R}
\end{aligned}
\end{equation}
with a potential $ v $ satisfying the following conditions
\begin{equation}
\label{v_conditions}
\begin{aligned}
& v( z ) = \overline{ v( z ) }, \quad v( z ) \in L^{ \infty }( \mathbb{C} ), \\
& | \partial_{ z }^{ j_1 } \partial_{ \bar z }^{ j_2 } v( z ) | < q ( 1 + | z | )^{ - 4 - \varepsilon } \text{ for some } q > 0, \; \varepsilon > 0, \text{ where } j_1 + j_2 \leqslant 3.
\end{aligned}
\end{equation}

For $ \lambda \in \mathbb{C} $  we consider solutions $ \psi_1( z, \lambda ) $, $ \psi_2( z, \lambda ) $, $ \psi_3( z, \lambda ) $ of (\ref{schrodinger}) having the following asymptotics
\begin{align}
\label{psi1_asympt}
& \psi_1( z, \lambda ) = e^{ i \lambda z } \mu_1( z, \lambda ), \quad \mu_1( z, \lambda ) = 1 + \overline{o}( 1 ), \text{ as } | z | \to \infty, \\
\label{psi2_asympt}
& \psi_2( z, \lambda ) = e^{ i \lambda z } \mu_2( z, \lambda ), \quad \mu_2( z, \lambda ) = z + \overline{o}( 1 ), \text{ as } | z | \to \infty, \\
\label{psi3_asympt}
& \psi_3( z, \lambda ) = e^{ i \lambda z } \mu_3( z, \lambda ), \quad \mu_3( z, \lambda ) = z^2 + \overline{o}( 1 ), \text { as } | z | \to \infty.
\end{align}

Solutions of (\ref{schrodinger}) with asymptotics (\ref{psi1_asympt}) are known in literature as Faddeev's exponentially growing solutions (see \cite{F,BLMP1}). Solutions of (\ref{schrodinger}) with asymptotics (\ref{psi2_asympt}) were introduced in \cite{BLMP1}.

Functions $ \mu_1( z, \lambda ) $, $ \mu_2( z, \lambda ) $, $ \mu_3( z, \lambda ) $ defined by (\ref{psi1_asympt})-(\ref{psi3_asympt}) can be also represented as solutions of the following integral equations
\begin{align}
\label{mu_int_equation}
& \mu_1( z, \lambda ) = 1 + \iint\limits_{ \mathbb{C} } g( z - \xi, \lambda ) v( \xi ) \mu_1( \xi, \lambda ) d \Re \xi d \Im \xi, \\
\label{nu_int_equation}
& \mu_2( z, \lambda ) = z + \iint\limits_{ \mathbb{C} } g( z - \xi, \lambda ) v( \xi ) \mu_2( \xi, \lambda ) d \Re \xi d \Im \xi, \\
\label{ksi_int_equation}
& \mu_3( z, \lambda ) = z^2 + \iint\limits_{ \mathbb{C} } g( z - \xi, \lambda ) v( \xi ) \mu_3( \xi, \lambda ) d \Re \xi d \Im \xi, \text{ where } \\
\label{green}
& g( z, \lambda ) = - \left( \frac{ 1 }{ 4 \pi } \right)^2 \iint\limits_{ \mathbb{C} } \frac{ e^{ \frac{ i }{ 2 } ( p \bar z + \bar p z ) } }{ p \bar p + 2 p \lambda } d \Re p  d \Im p,
\end{align}
where $ z \in \mathbb{C} $, $ \lambda \in \mathbb{C} \backslash 0 $.

The integral in (\ref{green}) can be computed explicitly (see \cite{BLMP1}):
\begin{equation*}
g( z, \lambda ) = \frac{ 1 }{ 16 \pi } \exp( - i \lambda z ) \left( \Ei( i \lambda z ) + \Ei( - i \bar \lambda \bar z ) \right).
\end{equation*}
Here $ \Ei( z ) $ is the exponential-integral function defined as follows
\begin{equation}
\label{ei_definition}
\Ei( z ) - \gamma - \ln( -z ) = \sum_{ n = 1 }^{ \infty } \frac{ z^n }{ n n! }, \quad z \in \mathbb{C} \backslash ( \mathbb{R}_+ \cup 0 ),
\end{equation}
where $ \gamma $ is the Euler-Mascheroni constant
\begin{equation*}
\gamma = - \int\limits_{ 0 }^{ \infty } e^{ -x } \ln x dx,
\end{equation*}
the branch cut for the logarithm function is taken on the negative real axis: $ \ln( z ) = \ln| z | + i \Arg z $, $ | \Arg z | < \pi $, and the series in the right part of (\ref{ei_definition}) converges on the whole complex plane.

\begin{statement}
\label{ei_statement}
Function $ \Ei( z ) $ defined by (\ref{ei_definition}) possesses the following properties:
\begin{enumerate}
\item \label{ei_cont} $ \Ei( z ) + \Ei( \bar z ) $ can be extended uniquely and continuously to $ z \in \mathbb{R}_+ $ via
\begin{equation*}
\Ei( z ) + \Ei( \bar z ) = 2 \gamma + 2 \ln | z | + \sum_{ n = 1 }^{ \infty } \frac{ z^n + \bar z^n }{ n n! };
\end{equation*}
\item \label{ei_sym} $ \overline{ \Ei( z ) } = \Ei( \bar z ) $;
\item $ | \Ei( z ) + \Ei( \bar z ) | \leqslant C_{ \delta } \ln( | z |^2 ) $ for $ 0 < | z | \leqslant \delta $ (function $ \Ei( z ) + \Ei( \bar z ) $ has an integrable singularity at $ z = 0 $);
\item \label{ei_der} $ \partial_z \Ei( z ) = \dfrac{ e^z }{ z } $ for $ z \in \mathbb{C} \backslash ( \mathbb{R}_+ \cup 0 ) $;
\item \label{ei_int} $ \Ei( z ) $ possesses the following representation $ \Ei( z ) = \int\limits_{ \Gamma_z } \frac{ e^{ \tau } }{ \tau } d \tau = \int\limits_{ -\infty }^{ z } \frac{ e^{ \tau } }{ \tau } d \tau $, where $ \Gamma_z $ is any contour on the cut complex plane $ \mathbb{C} \backslash ( \mathbb{R}_+ \cup 0 ) $ connecting points $ -\infty $ and $ z $;
\item \label{ei_est} $ | e^{ -z }( \Ei( z ) + \Ei( \bar z ) ) | \leqslant \frac{ C }{ | z | } $ for $ | z | > 0 $.
\end{enumerate}
\end{statement}
Properties \ref{ei_cont}-\ref{ei_int} are well-known in literature (see, for example, \cite{BLMP1} for some of them) and can be easily derived from (\ref{ei_definition}). A detailed proof of property \ref{ei_est} is presented in subsection \ref{ei_subs} of Appendix.

\bigskip
It is easy to see that function $ g( z, \lambda ) $ has a logarithmic singularity at $ \lambda = 0 $ and thus the functions $ \mu_1( z, \lambda ) $, $ \mu_2( z, \lambda ) $, $ \mu_3( z, \lambda ) $ are not generally defined for $ \lambda = 0 $ even for arbitrarily small values of $ v $. In \cite{BLMP1} the following regularization of (\ref{green}) at $ \lambda = 0 $ was proposed:
\begin{equation}
\label{regg_def}
g\r( z, \lambda ) = \frac{ 1 }{ 16 \pi } \exp( -i \lambda z ) ( \Ei( i \lambda z ) + \Ei( -i \bar \lambda \bar z ) ) - \frac{ 1 }{ 16 \pi } ( 1 + X( z, \lambda ) ) \mathcal{G}( \lambda )
\end{equation}
where
\begin{equation}
\label{X_def}
X( z, \lambda ) = \exp( - i \lambda z - i \bar \lambda \bar z ), \quad \mathcal{G}( \lambda ) = \frac{ 1 }{ 4 } ( \exp( -i \lambda ) + \exp( i \bar \lambda ) ) \left( \Ei( i \lambda ) + \Ei( -i \bar \lambda ) \right).
\end{equation}

\begin{statement}
\label{g_statement}
Function $ g\r( z, \lambda ) $ defined by (\ref{regg_def}), (\ref{ei_definition}) possesses the following properties:
\begin{enumerate}
\item \label{g_zero} $ g\r( z, 0 ) = \frac{ 1 }{ 16 \pi } \ln | z |^2 $;
\item \label{g_ext} $ g\r( z, \lambda ) $ can be uniquely and continuously defined for $ z \in \mathbb{C} \backslash 0 $, $ \lambda \in \mathbb{C} $ via
\begin{multline*}
g\r( z, \lambda ) = \frac{ 1 }{ 16 \pi } \exp( - i \lambda z ) \left\{ 2 \gamma + 2 \ln | \lambda z | + \sum_{ n = 1 }^{ \infty } \frac{ ( i \lambda z )^ n + ( - i \bar \lambda \bar z )^n }{ n n! } \right\} - \\
- \frac{ 1 }{ 64 \pi } ( 1 + X( z, \lambda ) )( \exp( - i \lambda ) + \exp( i \bar \lambda ) ) \left\{ 2 \gamma + 2 \ln | \lambda | + \sum_{ n = 1 }^{ \infty } \frac{ ( i \lambda )^n + ( - i \bar \lambda )^n }{ n n! } \right\} \text{ for } \lambda \neq 0
\end{multline*}
and via the previous item of the current statement for $ \lambda = 0 $;
\item \label{g_sym} $ \overline{ g\r( z, \lambda ) } X( z, \lambda ) =  g\r( z, \lambda ) $;
\item \label{g_decr} $ | g\r( z, \lambda ) | \leqslant \frac{ \const }{ | \lambda | } \left( 1 + \frac{ 1 }{ | z | } \right) $ for $ z \in \mathbb{C} \backslash 0 $, $ \lambda \in \mathbb{C} \backslash 0 $;
\item \label{g_lim} $ g\r( z, \lambda ) = - \frac{ 1 }{ 16 \pi } ( 1 + X( z, \lambda ) ) \mathcal{G}( \lambda ) + \underline{O}\left( \frac{ 1 }{ | z | } \right) $ as $ | z | \to \infty $ for any fixed $ \lambda \in \mathbb{C} \backslash 0 $;
\item \label{g_der} $ \dfrac{ \partial g\r }{ \partial \bar \lambda } = \dfrac{ 1 }{ 16 \pi \bar \lambda } X + \dfrac{ i \bar z }{ 16 \pi } X \mathcal{G} - \dfrac{ 1 }{ 16 \pi } ( 1 + X ) \dfrac{ \partial \mathcal{G} }{ \partial \bar \lambda } $.
\end{enumerate}
\end{statement}
Properties of function $ g\r( z, \lambda ) $ follow directly from definitions (\ref{regg_def}), (\ref{ei_definition}) and Statement \ref{ei_statement}.

Now we define functions $  \mu\r_1( z, \lambda ) $, $ \mu\r_2( z, \lambda ) $, $ \mu\r_3( z, \lambda ) $ as the solutions of the following integral equations:
\begin{align}
\label{tilde_mu1}
& \mu\r_1( z, \lambda ) = 1 + \iint\limits_{ \mathbb{C} } g\r( z - \xi, \lambda ) v( \xi )  \mu\r_1( \xi, \lambda ) d \Re \xi d \Im \xi, \\
\label{tilde_mu2}
& \mu\r_2( z, \lambda ) = z + \iint\limits_{ \mathbb{C} } g\r( z - \xi, \lambda ) v( \xi ) \mu\r_2( \xi, \lambda ) d \Re \xi d \Im \xi, \\
\label{tilde_mu3}
& \mu\r_3( z, \lambda ) = z^2 + \iint\limits_{ \mathbb{C} } g\r( z - \xi, \lambda ) v( \xi ) \mu\r_3( \xi, \lambda ) d \Re \xi d \Im \xi.
\end{align}
We also define the following functions: $ \psi\r_1( z, \lambda ) = e^{ i \lambda z } \mu\r_1( z, \lambda ) $, $ \psi\r_2( z, \lambda ) = e^{ i \lambda z } \mu\r_2( z, \lambda ) $, $ \psi\r_3( z, \lambda ) = e^{ i \lambda z } \mu\r_3( z, \lambda ) $. Note that functions $ \psi\r_1( z, \lambda ) $, $ \psi\r_2( z, \lambda ) $, $ \psi\r_3( z, \lambda ) $ are new ``scattering'' solutions of the Schr\"odinger equation (\ref{schrodinger}).

In terms of  $ m\r_1( z, \lambda ) = ( 1 + | z | )^{ -( 3 + \varepsilon / 2 ) } \mu\r_1( z, \lambda ) $, $ m\r_2( z, \lambda ) = ( 1 + | z | )^{ -( 3 + \varepsilon / 2 ) } \mu\r_2( z, \lambda ) $, $ m\r_3( z, \lambda ) = ( 1 + | z | )^{ - ( 3 + \varepsilon / 2 ) } \mu\r_3( z, \lambda ) $ equations (\ref{tilde_mu1}), (\ref{tilde_mu2}), (\ref{tilde_mu3}) respectively take the forms
\begin{align}
\label{m1_equation}
& m\r_1( z, \lambda ) = ( 1 + | z | )^{ -( 3 + \varepsilon / 2 ) } + \iint\limits_{ \mathbb{C} } ( 1 + | z | )^{ -( 3 + \varepsilon / 2 ) } g\r( z - \xi, \lambda ) \frac{ v( \xi ) }{ ( 1 + | \xi | )^{ -( 3 + \varepsilon / 2 ) } } m\r_1( \xi, \lambda ) d \Re \xi d \Im \xi, \\
\label{m2_equation}
& m\r_2( z, \lambda ) = z ( 1 + | z | )^{ -( 3 + \varepsilon / 2 ) } + \iint\limits_{ \mathbb{C} } ( 1 + | z | )^{ -( 3 + \varepsilon / 2 ) } g\r( z - \xi, \lambda ) \frac{ v( \xi ) }{ ( 1 + | \xi | )^{ -( 3 + \varepsilon / 2 ) } } m\r_2( \xi, \lambda ) d \Re \xi d \Im \xi, \\
\label{m3_equation}
& m\r_3( z, \lambda ) = z^2 ( 1 + | z | )^{ -( 3 + \varepsilon / 2 ) } + \iint\limits_{ \mathbb{C} } ( 1 + | z | )^{ -( 3 + \varepsilon / 2 ) } g\r( z - \xi, \lambda ) \frac{ v( \xi ) }{ ( 1 + | \xi | )^{ -( 3 + \varepsilon / 2 ) } } m\r_3( \xi, \lambda ) d \Re \xi d \Im \xi.
\end{align}
The integral operator $ H\r( \lambda ) $ of the integral equations (\ref{m1_equation}), (\ref{m2_equation}), (\ref{m3_equation}) is a Hilbert-Schmidt operator: more precisely, $ H\r( \cdot, \cdot, \lambda ) \in L^2( \mathbb{C} \times \mathbb{C} ) $, where $ H\r( z, \xi, \lambda ) $ is the Schwartz kernel of the integral operator $ H\r( \lambda ) $, and $ | \Tr ( H\r )^2( \lambda ) | < \infty $. Thus, the modified Fredholm determinant for (\ref{m1_equation}), (\ref{m2_equation}), (\ref{m3_equation}) can be defined by means of the formula:
\begin{equation}
\label{fred_determinant}
\ln \Delta\r( \lambda ) = \Tr( \ln( I - H\r( \lambda ) ) + H\r( \lambda ) ).
\end{equation}
For the precise sense of this definition see \cite{GK}.

We will also define
\begin{equation*}
\mathcal{E}\r = \{ \lambda \in \mathbb{C} \colon \Delta\r( \lambda ) = 0 \}.
\end{equation*}
In this notation $ \mathcal{E}\r $ represents the set of $ \lambda \in \mathbb{C} $ for which either existence or uniqueness of solutions of (\ref{m1_equation}), (or, similarly, of (\ref{m2_equation}) or of (\ref{m3_equation})) fails.

For $ \lambda \in \mathbb{C} \backslash \mathcal{E}\r $ we define the following ``scattering data'' $ S\r( \lambda ) $ for the potential $ v $:
\begin{align}
\label{s_data}
& S\r( \lambda ) = \{ a\r_1( \lambda ), b\r_1( \lambda ), c\r_1( \lambda ), d\r_1( \lambda ), a\r_2( \lambda ), c\r_2( \lambda ), a\r_3( \lambda ) \}, \\
\label{a1_data}
& a\r_1( \lambda ) = \iint\limits_{ \mathbb{C} } v( z ) \mu\r_1( z, \lambda ) d \Re z d \Im z, \\
\label{b1_data}
& b\r_1( \lambda ) = \iint\limits_{ \mathbb{C} } e^{ i \lambda z + i \bar \lambda \bar z } v( z ) \mu\r_1( z, \lambda ) d \Re z d \Im z, \\
\label{c1_data}
& c\r_1( \lambda ) = \iint\limits_{ \mathbb{C} } z v( z ) \mu\r_1( z, \lambda ) d \Re z d \Im z, \\
\label{d1_data}
& d\r_1( \lambda ) = \iint\limits_{ \mathbb{C} } \bar z e^{ i \lambda z + i \bar \lambda \bar z } v( z ) \mu\r_1( z, \lambda ) d \Re z d \Im z, \\
\label{a2_data}
& a\r_2( \lambda ) = \iint\limits_{ \mathbb{C} }  v( z ) \mu\r_2( z, \lambda ) d \Re z d \Im z, \\
\label{c2_data}
& c\r_2( \lambda ) = \iint\limits_{ \mathbb{C} } z v( z ) \mu\r_2( z, \lambda ) d \Re z d \Im z, \\
\label{a3_data}
& a\r_3( \lambda ) = \iint\limits_{ \mathbb{C} } v( z ) \mu\r_3( z, \lambda ) d \Re z d \Im z.
\end{align}

Functions $ a\r_1 $, $ b\r_1 $ are regularized analogs of the standard Faddeev generalized scattering data for the $ 2 $-dimensional Schr\"odinger equation at zero energy. The scattering data $ d\r_1 $, $ a\r_2 $ for the case of the Schr\"odinger equation at zero energy were introduced in \cite{BLMP1}.

The following properties of function $ \Delta\r( \lambda ) $ will play a substantial role in the proof of the main result.
\begin{statement}
\label{delta_prop}
Let $ v $ satisfy conditions (\ref{v_conditions}). Then function $ \Delta\r( \lambda ) $ satisfies the following properties:
\begin{enumerate}
\item \label{delta_continuity} $ \Delta\r \in C( \mathbb{C} ) $;
\item \label{delta_limit} $ \Delta\r( \lambda ) \to 1 $ as $ | \lambda | \to \infty $;
\item \label{real_valued} $ \Delta\r $ is real-valued;
\item $ \Delta\r( \lambda ) $ satisfies the following $ \bar \partial $-equation
\begin{multline}
\label{dif_det}
\frac{ \partial \Delta\r }{ \partial \bar \lambda } = \Biggl\{ \frac{ 1 }{ 16 \pi \bar \lambda } ( - \overline{ a\r_1( \lambda ) } + \hat v( 0 ) ) - \frac{ 1 }{ 16 \pi } \frac{ \partial \mathcal{G} }{ \partial \bar \lambda } ( - \overline{ a\r_1( \lambda ) } - a\r_1( \lambda ) + 2 \hat v( 0 ) )  + \\
+ \frac{ i }{ 16 \pi } \mathcal{G}( \lambda ) ( - \overline{ a\r_2( \lambda ) } + \overline{ c\r_1( \lambda ) } ) \Biggr\} \Delta\r,
\end{multline}
where $ \hat v( 0 ) = \iint\limits_{ \mathbb{C} } v( z ) d \Re z d \Im z $, $ \lambda \in \mathbb{C} \backslash ( \mathcal{E}\r \cup 0 ) $.
\end{enumerate}
\end{statement}
Analogs of these properties of $ \Delta\r $ for the nonregularized determinant in the case of nonzero energy can be found in \cite{HN,N2,KN3}. The properties of $ \Delta\r $ at zero energy can be proved similarly. In particular, property \ref{delta_continuity} is a consequence of continuous dependency of $ H\r( \lambda ) $ on $ \lambda $ (in particular, see Statement \ref{conth_statement} of Appendix for the proof of continuity of $ H\r( \lambda ) $ at $ \lambda = 0 $). Property \ref{delta_limit} follows from items \ref{g_ext}, \ref{g_decr} of Statement \ref{g_statement} (see Statement \ref{vanh_statement} of Appendix for details). Property \ref{real_valued} is a consequence of item \ref{g_sym} of Statement \ref{g_statement}. The derivation of equation (\ref{dif_det}) is based on the ideas proposed in \cite{HN} and is presented in subsection \ref{dif_subs} of Appendix.

\begin{statement}
\label{scat_statement}
Let $ v $ satisfy conditions (\ref{v_conditions}). Then
\begin{enumerate}
\item \label{mu_continuity} $ \mu\r_1( z, \lambda ) $ is a continuous function of $ \lambda $ on $ \mathbb{C} \backslash \mathcal{E}\r $;
\item \label{mu_dbar_prop} $ \mu\r_1( \lambda ) $ satisfies the following $ \bar \partial $-equation:
\begin{equation}
\label{mu_dbar}
\frac{ \partial \mu\r_1 }{ \partial \bar \lambda } = \frac{ 1 }{ 16 \pi }\left\{ \frac{ 1 }{ \bar \lambda } X b\r_1 - \frac{ \partial \mathcal{G} }{ \partial \bar \lambda } X b\r_1 - i \mathcal{G} X d\r_1 \right\} \overline{ \mu\r_1 } - \frac{ 1 }{ 16 \pi } \frac{ \partial \mathcal{G} }{ \partial \bar \lambda } a\r_1 \mu\r_1 + \frac{ i }{ 16 \pi } \mathcal{G} X b\r_1 \overline{ \mu\r_2 } ,\\
\end{equation}
for $ \lambda \in \mathbb{C} \backslash ( 0 \cup \mathcal{E}\r ) $;
\item \label{mu_limits} $ \mu\r_1 \to 1 $, as $ \lambda \to \infty $;
\item \label{scat_cont} the scattering data $ S\r( \lambda ) $ of (\ref{s_data}) are continuous on $ \mathbb{C} \backslash \mathcal{E}\r $;
\item \label{a_limit} $ \hat v( 0 ) = \lim\limits_{ \lambda \to \infty } a\r_1( \lambda ) $, where $ \hat v( 0 ) = \iint\limits_{ \mathbb{C} } v( z ) d \Re z d \Im z $.
\end{enumerate}
\end{statement}
Items \ref{mu_continuity}, \ref{scat_cont}, are a consequence of continuous dependency of $ H\r( \lambda ) $ on $ \lambda $ (in particular, see Statement \ref{conth_statement} of Appendix for the proof of continuity of $ H\r( \lambda ) $ at $ \lambda = 0 $). Items \ref{mu_dbar_prop}, \ref{mu_limits} were proved in \cite{BLMP1}. Item \ref{a_limit} follows from items \ref{mu_limits}, \ref{scat_cont}.

\section{Proof of Theorem \ref{main_theorem}}
\label{main_section}
We will start this section by formulating some preliminary lemmas. The proofs of these lemmas are given in Section \ref{lemmas_section}.

\begin{lemma}
\label{shift_lemma}
Let $ v( z ) $ be a potential satisfying (\ref{v_conditions}) with the modified Fredholm determinant $ \Delta\r( \lambda ) $ defined by (\ref{fred_determinant}) in the framework of equation (\ref{schrodinger}) and the scattering data
\begin{equation}
\mathcal{ S }\r( \lambda ) = \{ a\r_1( \lambda ), b\r_1( \lambda ), c\r_1( \lambda ), d\r_1( \lambda ), a\r_2( \lambda ), c\r_2( \lambda ), a\r_3( \lambda ) \} , \quad \lambda \in \mathbb{C} \backslash \mathcal{E}\r,
\end{equation}
defined by (\ref{a1_data})-(\ref{a3_data}) in the framework of equation (\ref{schrodinger}). Then the modified Fredholm determinant $ \Delta\r_{ \zeta }( \lambda ) $ and the scattering data $ \mathcal{ S }\r_{ \zeta }( \lambda ) $,
\begin{equation*}
\mathcal{S}\r_{ \zeta }( \lambda ) = \{ a\r_{ 1, \zeta }( \lambda ), b\r_{ 1, \zeta }( \lambda ), c\r_{ 1, \zeta }( \lambda ), d\r_{ 1, \zeta }( \lambda ), a\r_{ 2, \zeta }( \lambda ), c\r_{ 2, \zeta }( \lambda ), a\r_{ 3, \zeta }( \lambda ) \},
\end{equation*}
for the potential $ v_{ \zeta }( z ) = v( z - \zeta ) $ have the following properties:
\begin{enumerate}
\item \label{det_shift} $ \Delta\r_{ \zeta }( \lambda ) = \Delta\r( \lambda ) $;

\item scattering data $ S\r_{ \zeta }( \lambda ) $ are defined for $ \lambda \in \mathbb{C} \backslash \mathcal{E}\r $ and are related to $ \mathcal{ S }\r( \lambda ) $ by the following formulas
\begin{align}
\label{a1_zeta}
& a\r_{ 1, \zeta }( \lambda ) = a\r_1( \lambda ), \\
\label{b1_zeta}
& b\r_{ 1, \zeta }( \lambda ) =  e^{ i \lambda \zeta + i \bar \lambda \bar \zeta } b\r_1( \lambda ), \\
\label{c1_zeta}
& c\r_{ 1, \zeta }( \lambda ) = c\r_1( \lambda ) + \zeta a\r_1( \lambda ), \\
\label{d1_zeta}
& d\r_{ 1, \zeta }( \lambda ) = e^{ i \lambda \zeta + i \bar \lambda \bar \zeta }( d\r_1( \lambda ) + \bar \zeta b\r_1( \lambda ) ), \\
\label{a2_zeta}
& a\r_{ 2, \zeta }( \lambda ) = a\r_2( \lambda ) + \zeta a\r_1( \lambda ), \\
\label{c2_zeta}
& c\r_{ 2, \zeta }( \lambda ) = c\r_2( \lambda ) + \zeta ( a\r_2( \lambda ) + c\r_1( \lambda ) ) + \zeta^2 a\r_1( \lambda ), \\
\label{a3_zeta}
& a\r_{ 3, \zeta }( \lambda ) = a\r_3( \lambda ) + 2 \zeta a\r_2( \lambda ) + \zeta^2 a\r_1( \lambda ).
\end{align}

\end{enumerate}

\end{lemma}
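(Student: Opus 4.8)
The plan is to reduce everything to the translation covariance of the regularized functions $\mu\r_1,\mu\r_2,\mu\r_3$ and then to read off both the transformation of the scattering data and the invariance of the determinant. Writing $\mu\r_{j,\zeta}$ for the functions defined by (\ref{tilde_mu1})--(\ref{tilde_mu3}) with $v$ replaced by $v_\zeta$, the first and central step is to establish
\begin{align*}
& \mu\r_{1,\zeta}( z, \lambda ) = \mu\r_1( z - \zeta, \lambda ), \\
& \mu\r_{2,\zeta}( z, \lambda ) = \mu\r_2( z - \zeta, \lambda ) + \zeta\, \mu\r_1( z - \zeta, \lambda ), \\
& \mu\r_{3,\zeta}( z, \lambda ) = \mu\r_3( z - \zeta, \lambda ) + 2 \zeta\, \mu\r_2( z - \zeta, \lambda ) + \zeta^2\, \mu\r_1( z - \zeta, \lambda ).
\end{align*}
The mechanism is uniform: substitute each claimed right-hand side into the integral equation (\ref{tilde_mu1})--(\ref{tilde_mu3}) written for $v_\zeta$, use that the kernel depends only on $z-\xi$ together with $v_\zeta(\xi)=v(\xi-\zeta)$, and change the integration variable $\xi\mapsto\xi+\zeta$; this turns the equation for $v_\zeta$ into the original equation(s) for $v$ evaluated at $z-\zeta$. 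For $\mu\r_1$ the source term $1$ is translation invariant. For $\mu\r_2$ and $\mu\r_3$ the sources $z$ and $z^2$ expand as $(z-\zeta)+\zeta$ and $(z-\zeta)^2+2\zeta(z-\zeta)+\zeta^2$, which is exactly why the lower-order functions enter with binomial coefficients. Uniqueness of the solution for $\lambda$ outside the singular set then forces the identities.

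With these relations in hand, the formulas (\ref{a1_zeta})--(\ref{a3_zeta}) follow by inserting them into the definitions (\ref{a1_data})--(\ref{a3_data}) for $v_\zeta$ and performing the change of variables $z\mapsto z+\zeta$. This is routine: the factor $v_\zeta(z)\mu\r_{j,\zeta}(z,\lambda)$ becomes $v(z)\mu\r_j(z,\lambda)$ together with its lower-order corrections, while the explicit prefactors transform as $z\mapsto z+\zeta$, $\bar z\mapsto\bar z+\bar\zeta$ and $e^{i\lambda z+i\bar\lambda\bar z}\mapsto e^{i\lambda\zeta+i\bar\lambda\bar\zeta}e^{i\lambda z+i\bar\lambda\bar z}$. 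Collecting powers of $\zeta$ reproduces the seven relations; for example $c\r_{2,\zeta}$ acquires the term $\zeta(a\r_2+c\r_1)+\zeta^2 a\r_1$ from the cross terms between the shift of the prefactor $z$ and the shift hidden in $\mu\r_{2,\zeta}$.

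For the determinant I would argue that translation acts by a similarity transformation and hence preserves the spectrum entering (\ref{fred_determinant}). The nonzero eigenvalues of the Hilbert--Schmidt operator $H\r(\lambda)$ coincide with those of the integral operator $A(\lambda)$ with kernel $g\r(z-\xi,\lambda)v(\xi)$: conjugating $H\r(\lambda)$ by multiplication with $(1+|z|)^{3+\varepsilon/2}$ removes the weights in (\ref{m1_equation}) and converts the eigenvalue equation for $m\r$ into that for $\mu\r$. The shift $(\tau_\zeta f)(z)=f(z-\zeta)$ then gives $A_\zeta(\lambda)=\tau_\zeta A(\lambda)\tau_\zeta^{-1}$ by the same change of variables as above, so $A_\zeta(\lambda)$ and $A(\lambda)$ have identical eigenvalues with multiplicity. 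Since (\ref{fred_determinant}) yields $\Delta\r(\lambda)=\prod_j(1-\kappa_j)e^{\kappa_j}$ over the eigenvalues $\kappa_j$ of $H\r(\lambda)$, and vanishing eigenvalues contribute a factor $1$, it follows that $\Delta\r_\zeta(\lambda)=\Delta\r(\lambda)$; in particular $\mathcal{E}\r_\zeta=\mathcal{E}\r$, which justifies that $\mathcal{S}\r_\zeta$ is defined precisely on $\mathbb{C}\backslash\mathcal{E}\r$ (and legitimizes the appeal to uniqueness in the first step).

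The main obstacle is this last step. Because the weights $(1+|z|)^{-(3+\varepsilon/2)}$ in the definition of $H\r(\lambda)$ are not translation invariant, $H\r_\zeta(\lambda)$ is not a unitary conjugate of $H\r(\lambda)$, so invariance of the modified determinant cannot be invoked directly. The argument must pass through the unweighted operator $A(\lambda)$, where translation is a genuine similarity, and one must check that moving to the common nonzero spectrum is compatible with the Hilbert--Schmidt framework of (\ref{fred_determinant}). A useful consistency check is that $\Delta\r_\zeta$ satisfies the same $\bar\partial$-equation (\ref{dif_det}) with the same normalization at infinity: using (\ref{a1_zeta}), (\ref{c1_zeta}), (\ref{a2_zeta}) and $\hat v_\zeta(0)=\hat v(0)$ one verifies that the combination $-\overline{a\r_2}+\overline{c\r_1}$, and hence the whole coefficient, is unchanged under the shift.
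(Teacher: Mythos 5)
Your proposal is correct and follows essentially the same route as the paper: the translation identities for $\mu\r_{1,\zeta}$, $\mu\r_{2,\zeta}$, $\mu\r_{3,\zeta}$ obtained by the change of variables in (\ref{tilde_mu1})--(\ref{tilde_mu3}) together with uniqueness, then substitution into the definitions (\ref{a1_data})--(\ref{a3_data}), and for item \ref{det_shift} the principle that the modified Fredholm determinant (\ref{fred_determinant}) is determined by the eigenvalues of $H\r(\lambda)$. The only divergence is in how the eigenvalue correspondence is organized: you factor through the unweighted operator with kernel $g\r(z-\xi,\lambda)v(\xi)$, which is precisely where your ``main obstacle'' arises, since conjugation by the weight is unbounded and the unweighted operator fails to be Hilbert--Schmidt on $L^2(\mathbb{C})$ (its kernel does not decay in $z$, by item \ref{g_lim} of Statement \ref{g_statement}), so the passage to a common nonzero spectrum needs a weighted-space interpretation. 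The paper sidesteps this entirely by exhibiting the single map $m(z,\lambda)\mapsto m_\zeta(z,\lambda)=m(z-\zeta,\lambda)(1+|z-\zeta|)^{3+\varepsilon/2}(1+|z|)^{-(3+\varepsilon/2)}$, i.e. translation composed with multiplication by the weight ratio, which is bounded with bounded inverse on $L^2(\mathbb{C})$ and intertwines $H\r(\lambda)$ with $H\r_\zeta(\lambda)$; thus $H\r_\zeta(\lambda)$ \emph{is} a (non-unitary) bounded similarity conjugate of $H\r(\lambda)$, the nonzero eigenvalues coincide with multiplicity, and the determinant identity follows without ever leaving the Hilbert--Schmidt framework --- resolving exactly the obstacle you flagged.
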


\begin{lemma}
\label{dyn_lemma}
Let $ ( v, w ) $ satisfy equation (\ref{NV}) and conditions (\ref{smoothness})-(\ref{w_decrease}). Let $ S\r( \lambda, t ) $ be the scattering data for $ v $ defined by (\ref{a1_data})-(\ref{a3_data}) for a certain $ \lambda \in \mathbb{C} \backslash ( \mathcal{E}\r \cup 0 ) $ and all $ t \in \mathbb{R} $ in the framework of equation (\ref{schrodinger}). Then the evolution of these scattering data is described as follows:
\begin{align}\
\label{a1_evolution}
& a\r_1( \lambda, t ) = a\r_1( \lambda, 0 ), \\
\label{b1_evolution}
& b\r_1( \lambda, t ) = e^{ 8 i ( \lambda^3 + \bar \lambda^3 ) t } b\r_1( \lambda, 0 ), \\
\label{c1_evolution}
& c\r_1( \lambda, t ) = c\r_1( \lambda, 0 ) + 24 \lambda^2 a\r_1( \lambda, 0 ) t, \\
\label{d1_evolution}
& d\r_1( \lambda, t ) = e^{ 8 i ( \lambda^3 + \bar \lambda^3 ) t } \left( d\r_1( \lambda, 0 ) + 24 \bar \lambda^2 b\r_1( \lambda, 0 ) t \right)\\
\label{a2_evolution}
& a\r_2( \lambda, t ) = a\r_2( \lambda, 0 ) + 24 \lambda^2 a\r_1( \lambda, 0 ) t, \\
\label{c2_evolution}
& c\r_2( \lambda, t ) = c\r_2( \lambda, 0 ) + 24 \lambda^2 ( a\r_2( \lambda, 0 ) + c\r_1( \lambda, 0 ) ) t + ( 24 \lambda^2 )^2 a\r_1( \lambda, 0 ) t^2, \\
\label{a3_evolution}
& a\r_3( \lambda, t ) = a\r_3( \lambda, 0 ) + 48 \lambda^2 a\r_2( \lambda, 0 ) t - 48 i \lambda a\r_1( \lambda, 0 ) t + ( 24 \lambda^2 )^2 a\r_1( \lambda, 0 ) t^2.
\end{align}
\end{lemma}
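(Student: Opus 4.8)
The plan is to read off the time evolution of the scattering data from the Manakov $L$-$A$-$B$ triple (\ref{L-A-B})--(\ref{A-B}) at $E=0$, following the eigenfunction scheme of \cite{N2,KN3,K2}. First I would record the basic compatibility: if $\psi(\cdot,t)$ solves $L\psi=0$ for all $t$, then differentiating in $t$ and using $\partial_t L=[L,A]+BL$ gives $L\partial_t\psi=-(\partial_t L)\psi=-LA\psi$, since the terms $AL\psi$ and $BL\psi$ drop out when $L\psi=0$; hence $L(\partial_t\psi+A\psi)=0$. Applying this to $\psi=\psi\r_j$, the functions $\Psi_j:=\partial_t\psi\r_j+A\psi\r_j$ are again solutions of (\ref{schrodinger}) at zero energy.

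Next I would compute how $A$ acts on the leading asymptotics. Since $w,\bar w\to0$ at infinity and $\psi\r_j=e^{i\lambda z}\mu\r_j$ with $\mu\r_j=z^{\,j-1}+\overline{o}(1)$, the terms $-2w\partial_z-2\bar w\partial_{\bar z}$ of $A$ (see (\ref{A-B})) contribute only to the decaying part, so the leading polynomial behaviour of $A\psi\r_j$ is governed by $A_0:=-8\partial_z^3-8\partial_{\bar z}^3$. A direct computation of $A_0(z^ke^{i\lambda z})$ for $k=0,1,2$ gives
\[
A_0e^{i\lambda z}=8i\lambda^3e^{i\lambda z},\qquad A_0(ze^{i\lambda z})=8i\lambda^3 ze^{i\lambda z}+24\lambda^2e^{i\lambda z},
\]
\[
A_0(z^2e^{i\lambda z})=8i\lambda^3 z^2e^{i\lambda z}+48\lambda^2 ze^{i\lambda z}-48i\lambda\,e^{i\lambda z}.
\]
Because the prescribed polynomial parts $1,z,z^2$ of $\mu\r_j$ are $t$-independent, $\partial_t\psi\r_j$ carries no polynomial growth, so $\Psi_j$ has the same leading polynomial asymptotics as $\sum_k M_{jk}\psi\r_k$, where $M$ is the lower-triangular matrix with $M_{11}=M_{22}=M_{33}=8i\lambda^3$, $M_{21}=24\lambda^2$, $M_{32}=48\lambda^2$, $M_{31}=-48i\lambda$. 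The central step is then to upgrade this to the \emph{exact} identity $\partial_t\psi\r_j+A\psi\r_j=\sum_{k=1}^{3}M_{jk}(\lambda)\psi\r_k$ for $\lambda\in\mathbb C\setminus(\mathcal E\r\cup0)$. Both sides solve $L(\cdot)=0$, so their difference $D_j=e^{i\lambda z}\delta_j$ does too; by the previous computation $\delta_j$ has vanishing polynomial part, and its remaining bounded and decaying asymptotics are exactly those produced by $g\r$. Hence $\delta_j$ satisfies the homogeneous equation $\delta_j=\iint_{\mathbb C}g\r(z-\xi,\lambda)v(\xi)\delta_j(\xi)\,d\Re\xi\,d\Im\xi$, which for $\lambda\notin\mathcal E\r$ has only the trivial solution by the definition of $\mathcal E\r$ and the Fredholm alternative for (\ref{tilde_mu1})--(\ref{tilde_mu3}); thus $D_j\equiv0$.

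Finally I would substitute the large-$|z|$ asymptotics of $\psi\r_k$ into this identity and match coefficients mode by mode. These asymptotics follow from (\ref{tilde_mu1})--(\ref{tilde_mu3}), item \ref{g_lim} of Statement \ref{g_statement}, and the splitting $g\r=-\tfrac1{16\pi}(1+X)\mathcal G+\tfrac1{16\pi}e^{-i\lambda z}(\Ei(i\lambda z)+\Ei(-i\bar\lambda\bar z))$, whose last term is $\underline O(1/|z|)$ by item \ref{ei_est} of Statement \ref{ei_statement}. The coefficient of $e^{i\lambda z}$ carries $a\r_1,a\r_2,a\r_3$; the coefficient of $e^{-i\bar\lambda\bar z}=e^{i\lambda z}X$ carries $b\r_1$; and the subleading $1/z^2$ and $X/\bar z^2$ modes carry $c\r_1,c\r_2$ and $d\r_1$. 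Matching them yields the linear system
\[
\dot a\r_1=0,\quad \dot b\r_1=8i(\lambda^3+\bar\lambda^3)b\r_1,\quad \dot c\r_1=24\lambda^2 a\r_1,\quad \dot a\r_2=24\lambda^2 a\r_1,
\]
\[
\dot d\r_1=8i(\lambda^3+\bar\lambda^3)d\r_1+24\bar\lambda^2 b\r_1,\quad \dot a\r_3=48\lambda^2 a\r_2-48i\lambda\,a\r_1,\quad \dot c\r_2=24\lambda^2(a\r_2+c\r_1),
\]
whose solution with the prescribed data at $t=0$ is precisely (\ref{a1_evolution})--(\ref{a3_evolution}); the quadratic-in-$t$ terms in $c\r_2$ and $a\r_3$ arise from integrating the already $t$-linear right-hand sides.

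The hard part will be the third step: showing that $\delta_j$ genuinely solves the homogeneous integral equation, i.e.\ that a zero-energy solution whose $\mu$-part has no polynomial growth and has $g\r$-type asymptotics is necessarily represented by $g\r$ with zero source. This demands careful control of (i) the non-constant-coefficient terms $-2w\partial_z-2\bar w\partial_{\bar z}$ of $A$, which must be shown to affect only the decaying part of $\Psi_j$ under (\ref{smoothness})--(\ref{w_decrease}), and (ii) the subleading asymptotics of $g\r$, needed to locate $c\r_1,d\r_1,c\r_2$ in the expansion and obtain their inhomogeneous ODEs. As a consistency check, the polynomial-in-$t$ structure above matches the translation formulas of Lemma \ref{shift_lemma}: setting $\zeta=24\lambda^2 t$ reproduces (\ref{c1_zeta}), (\ref{a2_zeta}), (\ref{c2_zeta}) and the shift part of (\ref{a3_zeta}), the only genuinely dynamical ingredients being the phase $e^{8i(\lambda^3+\bar\lambda^3)t}$ of $b\r_1,d\r_1$ and the extra $-48i\lambda\,a\r_1 t$ in $a\r_3$.
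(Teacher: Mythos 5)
Your skeleton is in fact the same as the paper's: the compatibility $L(\partial_t\psi\r_j+A\psi\r_j)=0$, the identification of $T\psi\r_j=\partial_t\psi\r_j+A\psi\r_j$ with a lower-triangular combination $\sum_k M_{jk}\psi\r_k$ (your matrix $M$ agrees exactly with the paper's $f_1=8i\lambda^3\psi\r_1$, $f_2=24\lambda^2\psi\r_1$, $f_3=-48i\lambda\,\psi\r_1$), and the integration of the resulting triangular ODE system, whose solution you state correctly. However, there is a genuine gap at your central step, and the ingredient needed to fill it is precisely the main computational content of the paper's proof, which your proposal does not contain. The assertion that $\delta_j$ ``has vanishing polynomial part, and its remaining bounded and decaying asymptotics are exactly those produced by $g\r$, hence $\delta_j$ satisfies the homogeneous integral equation'' assumes what must be proven. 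Boundedness of $\delta_j$ is not enough: a bounded solution of $-4(\partial_z+i\lambda)\partial_{\bar z}\delta+v\delta=0$ only satisfies $\delta=c_1+c_2X+\iint_{\mathbb{C}}g\r(z-\xi,\lambda)v(\xi)\delta(\xi)\,d\Re\xi\,d\Im\xi$ for some constants $c_1,c_2$ (the free equation has the bounded solutions $1$ and $X$); $\mu\r_1$ itself is a bounded solution of the same equation that does \emph{not} solve the homogeneous integral equation. Concretely, the constant ($\mathcal{G}$-) mode of $\delta_1$ is $-\tfrac{1}{16\pi}\mathcal{G}\,\partial_t a\r_1$ and its $X\mathcal{G}$-mode involves $\partial_t b\r_1$ --- the very unknowns you are solving for --- so to conclude $c_1=c_2=0$ you must prove the self-consistency relations identifying these modes with $-\tfrac{1}{16\pi}\mathcal{G}\iint_{\mathbb{C}}v\,\delta_1\,d\Re\xi\,d\Im\xi$ and its $X$-analog. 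That is exactly what the paper supplies via the integration-by-parts identities (\ref{a1_repr})--(\ref{b1_repr}) (and their analogs for $f_2$, $f_3$ in subsection \ref{part_subs}), derived from the NV equation for $\partial_t v$ together with $v\psi\r_j=4\partial_z\partial_{\bar z}\psi\r_j$; combined with Lemma \ref{aux_lemma} they show that $f_j$ solves (\ref{schrodinger}) with the self-consistent asymptotics
\begin{equation*}
f_j=\Bigl(C_j-\tfrac{1}{16\pi}\mathcal{G}\iint\limits_{\mathbb{C}}e^{-i\lambda\xi}vf_j\,d\Re\xi\,d\Im\xi\Bigr)e^{i\lambda z}-\tfrac{1}{16\pi}\mathcal{G}\Bigl(\iint\limits_{\mathbb{C}}e^{i\bar\lambda\bar\xi}vf_j\,d\Re\xi\,d\Im\xi\Bigr)e^{-i\bar\lambda\bar z}+\overline{o}(1),\quad|z|\to\infty,
\end{equation*}
and only then does $\lambda\notin\mathcal{E}\r$ force $f_j=C_j\psi\r_1$. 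You flag ``the hard part'' but locate it in the $w$-terms of $A$ and the subleading behavior of $g\r$; the actual missing idea is this integration-by-parts mechanism, for which your proposal offers no substitute.

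A secondary divergence: even granting the exact identity, your plan to extract the evolutions of $c\r_1$, $d\r_1$, $c\r_2$ by matching $1/z^2$ and $X/\bar z^2$ modes is substantially harder than what the paper does and is not supported by the estimates available in it. It would require a rigorous second-order large-$|z|$ expansion of $g\r$ (the paper proves and uses only the leading order, item \ref{g_lim} of Statement \ref{g_statement}), and it would require applying the third-order operator $-8\partial_z^3-8\partial_{\bar z}^3$ to an asymptotic expansion term by term, which is not legitimate without additional argument. The paper avoids both: derivatives of $\mu\r_j$ are controlled through integral representations with kernels $\partial_z g\r$, $\partial_{\bar z}g\r$ (proof of Lemma \ref{aux_lemma}), and the $c$- and $d$-evolutions come from repeating the integration by parts with the weights $ze^{-i\lambda z}$ and $\bar ze^{i\bar\lambda\bar z}$, needing only first-order information. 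Your final ODE system, its integration, and the consistency check against Lemma \ref{shift_lemma} (with $\zeta=ct$ versus $24\lambda^2t$) are all correct, but as it stands the proof closes only if the integration-by-parts identities are added.
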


The remaining part of the proof of Theorem \ref{main_theorem} consists in the following. First of all we note that since $ ( v, w ) $ is a soliton, from Lemma \ref{shift_lemma} it follows that the set $ \mathcal{E}\r $ of values of $ \lambda \in \mathbb{C} $ for which the scattering data $ S\r( \lambda ) $ are not well-defined does not depend on $ t $.

Since $ ( v, w ) $ is a soliton, the time dynamics of its scattering data $ b\r_1 $ is described by the formula
\begin{equation*}
b\r_1( \lambda, t ) = \exp( i ( \lambda c + \bar \lambda \bar c ) t ) b\r_1( \lambda, 0 ).
\end{equation*}
(see formula (\ref{b1_zeta}) of Lemma \ref{shift_lemma}). Combining this with (\ref{b1_evolution}) from Lemma \ref{dyn_lemma} gives
\begin{equation*}
\exp( 8 i ( \lambda^3 + \bar \lambda^3 ) t ) b\r_1( \lambda, 0 ) = \exp( i ( \lambda c + \bar \lambda \bar c ) t ) b\r_1( \lambda, 0 ).
\end{equation*}
Since functions $ \lambda $, $ \bar \lambda $, $ \lambda^3 $, $ \bar \lambda^3 $, $ 1 $ are linearly independent in any neighborhood of any point in $ \mathbb{C} $ and $ b\r_1( \lambda, 0 ) $ is continuous on $ \mathbb{C} \backslash ( \mathcal{E}\r \cup 0 ) $, we obtain that
\begin{equation}
\label{b_null}
b\r_1( \lambda, t ) \equiv 0 \text{ for } \lambda \in \mathbb{C} \backslash ( \mathcal{E}\r \cup 0 ).
\end{equation}
Similarly, combining formulas (\ref{d1_zeta}) with $ \zeta = ct $, (\ref{d1_evolution}) and property (\ref{b_null}) we obtain that
\begin{equation}
\label{d_null}
d\r_1( \lambda, t ) \equiv 0 \text{ for } \lambda \in \mathbb{C} \backslash ( \mathcal{E}\r \cup 0 ).
\end{equation}

From (\ref{a2_zeta}), (\ref{a2_evolution}) we get that
\begin{equation*}
a\r_2( \lambda, 0 ) + c t a\r_1( \lambda, 0 ) = a\r_2( \lambda, 0 ) + 24 \lambda^2 t a\r_1( \lambda, 0 ).
\end{equation*}
Thus $ a\r_1( \lambda, 0 ) \equiv 0 $ on $ \mathbb{C} \backslash \{ \mathcal{E}\r \cup 0 \} $ and formula (\ref{a1_evolution}) implies that
\begin{equation}
\label{a_null}
a\r_1( \lambda, t ) \equiv 0 \text{ for } \lambda \in \mathbb{C} \backslash ( \mathcal{E}\r \cup 0 )
\end{equation}
and $ a\r_2( \lambda, t ) \equiv a\r_2( \lambda, 0 ) $, $ c\r_1( \lambda, t ) \equiv c\r_1( \lambda, 0 ) $ for $ \lambda \in \mathbb{C} \backslash \{ \mathcal{E}\r \cup 0 \} $. From item \ref{a_limit} of Statement \ref{scat_statement} it follows also that
\begin{equation}
\label{hatv_null}
\hat v( 0 )  = 0.
\end{equation}

Combining (\ref{c2_zeta}) with (\ref{c2_evolution}) and (\ref{a_null}) we derive that
\begin{equation}
\label{alpha_eq_c}
a\r_2( \lambda, t ) \equiv - c\r_1( \lambda, t ) \text{ for } \lambda \in \mathbb{C} \backslash ( \mathcal{E}\r \cup 0 )
\end{equation}
and combing (\ref{a3_zeta}) with (\ref{a3_evolution}), (\ref{a_null}), (\ref{alpha_eq_c}) yields
\begin{equation}
\label{alphac_null}
a\r_2( \lambda, t ) \equiv 0, \quad c\r_1( \lambda, t ) \equiv 0 \text{ for } \lambda \in \mathbb{C} \backslash ( \mathcal{E}\r \cup 0 ).
\end{equation}

Now equation (\ref{dif_det}) together with properties (\ref{a_null}), (\ref{hatv_null}) (\ref{alphac_null}) implies that $ \Delta\r $ is holomorphic on $ \mathbb{C} \backslash ( \mathcal{E}\r \cup 0 ) $.

Suppose that $ \mathcal{E}\r \neq \varnothing $. Item \ref{delta_limit} of Statement \ref{delta_prop} implies that $ \mathcal{E}\r $ is bounded. Since $ \mathcal{E}\r $ is a closed set, then there exists $ \lambda_* \in \mathcal{E}\r $ such that $ | \lambda_* | = \max\limits_{ \lambda \in \mathcal{E}\r } | \lambda | $. Function $ \Delta\r( \lambda ) $ is holomorphic in $ D = \{ \lambda \in \mathbb{C} \colon | \lambda | > | \lambda_* | \} $. Items \ref{delta_limit}, \ref{real_valued} of Statement \ref{delta_prop} together with holomorphicity of $ \Delta\r $ in $ D $ imply that $ \Delta\r( \lambda ) \equiv 1 $ on $ \lambda \in D $. On the other hand, $ \Delta\r( \lambda_* ) = 0 $, which contradicts property \ref{delta_continuity} of Statement \ref{delta_prop}. Thus we obtain that $ \mathcal{E}\r = \varnothing $ and $ \Delta\r \equiv 1 $ on $ \mathbb{C} $.

The function $ \mu\r_1 $ is holomorphic on $ \mathbb{C} \backslash 0 $ as follows from (\ref{mu_dbar}), item \ref{mu_continuity} of Statement \ref{scat_statement}, (\ref{b_null}), (\ref{d_null}), (\ref{a_null}) and the established fact that $ \mathcal{E}\r = \varnothing $. The function $ \mu\r_1 $ is also bounded on the whole complex plane due to the items \ref{mu_continuity}, \ref{mu_limits} of Statement \ref{scat_statement}, which implies, in particular, the holomorphicity of function $ \mu\r_1 $ on the whole complex plane $ \mathbb{C} $. From Liouville's theorem it follows that $ \mu\r_1 \equiv 1 $. Then, finally, from (\ref{schrodinger}) with $ \psi( z, \lambda ) = e^{ i \lambda z } $ we obtain that $ v \equiv 0 $.

\section{Proofs of Lemmas \ref{shift_lemma}, \ref{dyn_lemma}}
\label{lemmas_section}

\begin{proof}[Proof of Lemma \ref{shift_lemma}] \rule{1pt}{0pt}

\begin{enumerate}

\item Note that the eigenvalues of the integral operator $ H\r( \lambda ) $ of integral equations (\ref{m1_equation})-(\ref{m3_equation}) with potential $ v( z ) $ coincide with the eigenvalues of the integral operator $ H\r_{ \zeta }( \lambda ) $ of integral equations (\ref{m1_equation})-(\ref{m3_equation}) with potential $ v_{ \zeta }( z ) = v( z - \zeta ) $.

Indeed, from
\begin{equation*}
\iint\limits_{ \mathbb{C} } g\r( z - \xi, \lambda ) \frac{ v( \xi ) ( 1 + | \xi | )^{ 3 + \varepsilon / 2 } }{ ( 1 + | z | )^{ 3 + \varepsilon / 2 } } m( \xi, \lambda ) d \Re \xi d \Im \xi = \nu m( z, \lambda ), \quad \nu \in \mathbb{C},
\end{equation*}
it follows that $ m_{ \zeta }( z, \lambda ) = m( z - \zeta, \lambda )  ( 1 + | z - \zeta | )^{ 3 + \varepsilon / 2 } ( 1 + | z | )^{ - ( 3 + \varepsilon / 2 ) } $ satisfies the following equation
\begin{equation*}
\iint\limits_{ \mathbb{C} } g\r( z - \xi, \lambda ) \frac{ v( \xi - \zeta ) ( 1 + | \xi | )^{ 3 + \varepsilon / 2 } }{ ( 1 + | z | )^{ 3 + \varepsilon / 2 } } m_{ \zeta }( \xi, \lambda ) d \Re \xi d \Im \xi = \nu m_{ \zeta }( z, \lambda ).
\end{equation*}

Since the modified Fredholm determinant of an operator is defined by its eigenvalues uniquely (see \cite{GK}), item \ref{det_shift} of Lemma \ref{shift_lemma} is proved.

Item \ref{det_shift} of Lemma \ref{shift_lemma} can also be proved by considering the right-hand side of (\ref{fred_determinant}) as a sum of convergent series; then it can be checked that every member of this series for $ \Delta\r_{ \zeta }( \lambda ) $ coincides with the corresponding member of the series for $ \Delta\r( \lambda ) $.

\item
Function $ \mu\r_{ 1, \zeta }( z, \lambda ) $ corresponding to the potential $ v_{ \zeta }( z ) = v( z - \zeta ) $ is defined as the solution of the following equation
\begin{equation*}
\mu\r_{ 1, \zeta }( z, \lambda ) = 1 + \iint\limits_{ \mathbb{C} } g\r( z - \xi, \lambda ) v( \xi - \zeta ) \mu\r_{ 1, \zeta }( \xi, \lambda ) d \Re \xi d \Im \xi.
\end{equation*}
The change of variables $ \xi - \zeta = q $, $ z - \zeta = \eta $ yields the following equation on $ \mu\r_{ 1, \zeta } $:
\begin{equation*}
\mu\r_{ 1, \zeta }( \eta + \zeta, \lambda ) = 1 + \iint\limits_{ \mathbb{C} } g\r( \eta - q, \lambda ) v( q ) \mu\r_{ 1, \zeta }( q + \zeta, \lambda ) d \Re q d \Im q,
\end{equation*}
from which we obtain that $ \mu\r_{ 1, \zeta }( z, \lambda ) $ is defined for $ \lambda \in \mathbb{C} \backslash \mathcal{E}\r $ and is equal to $ \mu_{ 1, \zeta }( z, \lambda ) = \mu\r_1( z - \zeta, \lambda ) $. Now we substitute this formula for $ \mu\r_{ 1, \zeta } $ into the definition of $ a\r_{ 1, \zeta }( \lambda ) $:
\begin{equation*}
a\r_{ 1, \zeta }( \lambda ) = \iint\limits_{ \mathbb{C} } v_{ \zeta }( z ) \mu\r_{ 1, \zeta }( z, \lambda ) d \Re z d \Im z = \iint\limits_{ \mathbb{C} } v( z - \zeta ) \mu\r_1( z - \zeta, \lambda ) d \Re z d \Im z = a\r_1( \lambda ).
\end{equation*}
Similarly, for $ b\r_{ 1, \zeta }( \lambda ) $ we obtain
\begin{multline*}
b\r_{ 1, \zeta }( \lambda ) = \iint\limits_{ \mathbb{C} } e^{ i \lambda z + i \bar \lambda \bar z } v( z - \zeta ) \mu\r_1( z - \zeta, \lambda ) d \Re z d \Im z = \\
= \iint\limits_{ \mathbb{C} } e^{ i \lambda ( \eta + \zeta ) + i \bar \lambda ( \bar \eta + \bar \zeta ) } v( \eta ) \mu\r_1( \eta, \lambda ) d \Re \eta d \Im \eta = e^{ i \lambda \zeta + i \bar \lambda \bar \zeta } b\r_1( \lambda ).
\end{multline*}

Similarly formulas (\ref{c1_zeta}) and (\ref{d1_zeta}) can be obtained.

Function $ \mu\r_{ 2, \zeta }( z, \lambda ) $ corresponding to the potential $ v_{ \zeta }( z ) = v( z - \zeta ) $ is defined as the solution of the following equation
\begin{equation*}
\mu\r_{ 2, \zeta }( z, \lambda ) = z + \iint\limits_{ \mathbb{C} } g\r( z - \xi, \lambda ) v( \xi - \zeta ) \mu\r_{ 2, \zeta }( \xi, \lambda ) d \Re \xi d \Im \xi.
\end{equation*}
The change of variables $ \xi - \zeta = q $, $ z - \zeta = \eta $ yields the following equation on $ \mu\r_{ 2, \zeta } $:
\begin{equation*}
\mu\r_{ 2, \zeta }( \eta + \zeta, \lambda ) = \eta + \zeta + \iint\limits_{ \mathbb{C} } g\r( \eta - q, \lambda ) v( q ) \mu\r_{ 2, \zeta }( q + \zeta, \lambda ) d \Re q d \Im q,
\end{equation*}
from which we obtain that $ \mu\r_{ 2, \zeta }( z, \lambda ) $ is defined for $ \lambda \in \mathbb{C} \backslash \mathcal{E}\r $ and is equal to $ \mu\r_{ 2, \zeta }( z, \lambda ) = \mu\r_2( z - \zeta, \lambda ) + \zeta \mu\r_1( z - \zeta, \lambda ) $. Thus for the scattering data corresponding to $ \mu\r_{ 2, \zeta }( z, \lambda ) $ we have
\begin{multline*}
a\r_{ 2, \zeta }( \lambda ) = \iint\limits_{ \mathbb{C} } v_{ \zeta }( z ) \mu\r_{ 2, \zeta }( z, \lambda ) d \Re z d \Im z = \\
= \iint\limits_{ \mathbb{C} } v( z - \zeta ) \mu\r_2( z - \zeta, \lambda ) d \Re z d \Im z + \zeta \iint\limits_{ \mathbb{C} } v( z - \zeta ) \mu\r_1( z - \zeta ) d \Re z d \Im z = \\
= a\r_2( \lambda ) + \zeta a\r_1( \lambda ),
\end{multline*}

\begin{multline*}
c\r_{ 2, \zeta }( \lambda ) = \iint\limits_{ \mathbb{C} } z v_{ \zeta }( z ) \mu\r_{ 2, \zeta }( z, \lambda ) d \Re z d \Im z = \\
= \iint\limits_{ \mathbb{C} } z v( z - \zeta ) \mu\r_2( z - \zeta, \lambda ) d \Re z d \Im z + \zeta \iint\limits_{ \mathbb{C} } z v( z - \zeta ) \mu\r_1( z - \zeta, \lambda ) d \Re z d \Im z = \\
= c\r_{ 2 }( \lambda ) + \zeta a\r_2( \lambda ) + \zeta c\r_1( \lambda ) + \zeta^2 a\r_1( \lambda ).
\end{multline*}

Finally, function $ \mu\r_{ 3, \zeta }( z, \lambda ) $ corresponding to the potential $ v_{ \zeta }( z ) = v( z - \zeta ) $ is defined as the solution of the following equation
\begin{equation*}
\mu_{ 3, \zeta }( z, \lambda ) = z^2 + \iint\limits_{ \mathbb{C} } g\r( z - \xi, \lambda ) v( \xi - \zeta ) \mu\r_{ 3, \zeta }( \xi, \lambda ) d \Re \xi d \Im \xi.
\end{equation*}
The change of variables $ \xi - \zeta = q $, $ z - \zeta = \eta $ yields the following equation on $ \mu\r_{ 3, \zeta } $:
\begin{equation*}
\mu\r_{ 3, \zeta }( \eta + \zeta, \lambda ) = \eta^2 + 2 \eta \zeta + \zeta^2 + \iint\limits_{ \mathbb{C} } g\r( \eta - q, \lambda ) v( q ) \mu\r_{ 3, \zeta }( q + \eta, \lambda ) d \Re q d \Im q,
\end{equation*}
from which we obtain that $ \mu\r_{ 3, \zeta }( z, \lambda ) $ is defined for $ \lambda \in \mathbb{C} \backslash \mathcal{E}\r $ and is equal to $ \mu\r_{ 3, \zeta }( z, \lambda ) = \mu\r_3( z - \zeta, \lambda ) + 2 \zeta \mu\r_2( z - \zeta, \lambda ) + \zeta^2 \mu\r_1( z - \zeta, \lambda ) $. From this representation formula (\ref{a3_zeta}) can be easily obtained.

\end{enumerate}

\end{proof}

In order to prove Lemma \ref{dyn_lemma} we will need an auxiliary lemma.
\begin{lemma}
\label{aux_lemma}
Let $ T $ be an operator defined by
\begin{equation*}
T = \partial_{ t } + A = \partial_{ t } - 8 \partial_{ z }^3 - 2 w \partial_{ z } - 8 \partial_{ \bar z }^3 - 2 \bar w \partial_{ \bar z },
\end{equation*}
where $ w $ is defined via (\ref{w_def}), (\ref{w_decrease}) and it is assumed that $ v $ satisfies  conditions (\ref{smoothness})-(\ref{decrease}) and
\begin{equation*}
| \partial_{ t } v( x, t ) | \leqslant \frac{ \tilde q( t ) }{ ( 1 + | x | )^{ 4 + \varepsilon } }, \text{ for some } \tilde q( t ) > 0.
\end{equation*}

Suppose that for a certain $ \lambda \in \mathbb{C} \backslash 0 $, and every $ t \in \mathbb{R} $ solutions $ \mu\r_1( z, \lambda, t ) $, $ \mu\r_2( z, \lambda, t ) $, $ \mu\r_3( z, \lambda, t ) $ of (\ref{tilde_mu1}), (\ref{tilde_mu2}), (\ref{tilde_mu3}) correspondingly exist and are unique. Then
\begin{align}
\label{tpsi1_asympt}
& T \psi_1\r = & & \left( - 8 ( i \lambda )^3 + \frac{ 1 }{ 16 \pi } \left( - \mathcal{G} \partial_t a\r_{1} + 8 ( i \lambda )^3 \mathcal{G} a\r_1 \right) \right) e^{ i \lambda z } + \frac{ 1 }{ 16 \pi }\left( - \mathcal{G} \partial_t b\r_1 - 8 ( i \bar \lambda )^3 \mathcal{G} b\r_1 \right) e^{ - i \bar \lambda \bar z } + \\
\nonumber
&   & + & \overline{o}( 1 ) \text{ as } | z | \to \infty, \\
\label{tpsi2_asympt}
& T \psi\r_2 = & - & 8 ( i \lambda )^3 z e^{ i \lambda z } + \left( - 24 ( i \lambda )^2 + \frac{ 1 }{ 16 \pi }\left( - \mathcal{G} \partial_t a\r_{2} + 8 ( i \lambda )^3 \mathcal{G} a\r_2 \right) \right) e^{ i \lambda z } + \\
\nonumber
&   &+&\frac{ 1 }{ 16 \pi }\left( - \mathcal{G} \partial_t b\r_2 - 8 ( i \bar \lambda )^3 \mathcal{G} b\r_2 \right) e^{ - i \bar \lambda \bar z } + \overline{o}( 1 ), \quad \text{ as } | z | \to \infty, \\
\label{tpsi3_asympt}
& T \psi\r_3 = & - & 8 ( i \lambda )^3 z^2 e^{ i \lambda z } - 48 ( i \lambda )^2 z e^{ i \lambda z } + \left( - 48 ( i \lambda ) + \frac{ 1 }{ 16 \pi }\left( - \mathcal{G} \partial_t a\r_{3} + 8 ( i \lambda )^3 \mathcal{G} a\r_3 \right) \right) e^{ i \lambda z } + \\
\nonumber
&   &+& \frac{ 1 }{ 16 \pi }\left( - \mathcal{G} \partial_t b\r_3 - 8 ( i \bar \lambda )^3 \mathcal{G} b\r_3 \right) e^{ - i \bar \lambda \bar z } + \overline{o}( 1 ), \quad \text{ as } | z | \to \infty.
\end{align}
\end{lemma}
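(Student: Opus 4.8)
The plan is to prove the three asymptotic formulas by a direct computation: extract the leading large-$|z|$ behaviour of $\psi\r_1,\psi\r_2,\psi\r_3$ from the integral equations, apply $T=\partial_t+A$ term by term, and then show that everything not written explicitly is $\overline{o}(1)$.

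First I would compute the asymptotics of $\psi\r_j=e^{i\lambda z}\mu\r_j$. Inserting property \ref{g_lim} of Statement \ref{g_statement}, namely $g\r(z-\xi,\lambda)=-\frac{1}{16\pi}(1+X(z-\xi,\lambda))\mathcal{G}(\lambda)+\underline{O}(1/|z-\xi|)$, into (\ref{tilde_mu1})--(\ref{tilde_mu3}) and using $X(z-\xi,\lambda)=X(z,\lambda)e^{i\lambda\xi+i\bar\lambda\bar\xi}$, the two $\mathcal{G}$-terms reproduce exactly the scattering integrals (\ref{a1_data})--(\ref{a3_data}) together with the analogues $b\r_j(\lambda)=\iint_{\mathbb{C}}e^{i\lambda\xi+i\bar\lambda\bar\xi}v(\xi)\mu\r_j(\xi,\lambda)\,d\Re\xi\,d\Im\xi$. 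Combined with the identity $e^{i\lambda z}X(z,\lambda)=e^{-i\bar\lambda\bar z}$ (and $|X|=1$), this yields, as $|z|\to\infty$,
\begin{equation*}
\psi\r_j=\Bigl(z^{j-1}-\tfrac{1}{16\pi}\mathcal{G}a\r_j\Bigr)e^{i\lambda z}-\tfrac{1}{16\pi}\mathcal{G}b\r_j\,e^{-i\bar\lambda\bar z}+(\text{remainder}),\qquad j=1,2,3,
\end{equation*}
with the remainder of lower order on the exponential scale $e^{-\Im(\lambda z)}$.

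Next I would apply $T$. Since $\lambda$ is fixed, $\partial_t$ acts only on the time-dependent coefficients $a\r_j,b\r_j$, producing the terms $-\frac{1}{16\pi}\mathcal{G}\partial_t a\r_j$ and $-\frac{1}{16\pi}\mathcal{G}\partial_t b\r_j$. For the principal part of $A$, I would use that $\partial_z^3$ annihilates the antiholomorphic factor $e^{-i\bar\lambda\bar z}$ while $\partial_{\bar z}^3$ annihilates the holomorphic factors $z^ke^{i\lambda z}$; hence $-8\partial_z^3$ hits only the $e^{i\lambda z}$-terms and $-8\partial_{\bar z}^3$ only the $e^{-i\bar\lambda\bar z}$-term. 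A Leibniz expansion of $\partial_z^3(z^ke^{i\lambda z})$ gives the principal coefficient $(i\lambda)^3z^k$ together with the lower polynomial terms $3k(i\lambda)^2z^{k-1}+\dots$, which are precisely the sources of the coefficients $-24(i\lambda)^2$, $-48(i\lambda)^2$ and $-48(i\lambda)$ in (\ref{tpsi2_asympt})--(\ref{tpsi3_asympt}); likewise $\partial_{\bar z}^3e^{-i\bar\lambda\bar z}=(-i\bar\lambda)^3e^{-i\bar\lambda\bar z}=-(i\bar\lambda)^3e^{-i\bar\lambda\bar z}$ gives the coefficient $-8(i\bar\lambda)^3$. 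Collecting all contributions reproduces the right-hand sides of (\ref{tpsi1_asympt})--(\ref{tpsi3_asympt}).

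The hard part will be showing that the discarded contributions are genuinely $\overline{o}(1)$, and there are two sources. The first is the action of the third-order operators on the sub-leading part of $\psi\r_j$: differentiating the integral equations requires controlling $\partial_z^k,\partial_{\bar z}^k$ of the remainder kernel $g\r(\zeta,\lambda)+\frac{1}{16\pi}(1+X)\mathcal{G}$, for which I would use the differential identity $\partial_\zeta g\r=-i\lambda\bigl(g\r+\frac{\mathcal{G}}{16\pi}\bigr)+\frac{1}{16\pi\zeta}$ (a consequence of property \ref{ei_der} and definition (\ref{regg_def})) and its iterates to show that this kernel and all its derivatives up to order three remain $\underline{O}(1/|\zeta|)$; integrating against the localized $v\mu\r_j$ then keeps the sub-leading part and its derivatives $\underline{O}(1/|z|)$ relative to $e^{i\lambda z}$. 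The second, more delicate source is the first-order part $-2w\partial_z-2\bar w\partial_{\bar z}$ of $A$, where one needs the decay rate $w=\underline{O}(1/|z|^2)$, which follows from $\partial_{\bar z}w=-3\partial_zv$ and the decay of $v$ via the Cauchy transform. For $j=1,2$ this makes the $w$-terms $\overline{o}(1)$ at once, but for $j=3$ the product $w\,\partial_z\psi\r_3$ is only $O(1)$ on the exponential scale, so this borderline contribution must be tracked carefully and shown to be compensated within the expansion rather than simply discarded. Reconciling these borderline $w$-terms with the third-derivative remainder estimates is the technical heart of the argument, the simpler estimates for $\psi\r_1$ and $\psi\r_2$ serving as a warm-up.
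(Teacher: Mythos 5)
Your overall scheme is the same as the paper's, which writes out only the case of $\psi\r_1$: extract the large-$|z|$ asymptotics of $\mu\r_j$ and of its $z$-, $\bar z$- and $t$-derivatives from the integral equations (\ref{tilde_mu1})--(\ref{tilde_mu3}) using item \ref{g_lim} of Statement \ref{g_statement}, then collect terms; your kernel identity $\partial_\zeta g\r = -i\lambda\bigl(g\r + \tfrac{1}{16\pi}\mathcal{G}\bigr) + \tfrac{1}{16\pi\zeta}$ is correct and is equivalent to the paper's explicit formula for $\partial_z g\r$. Two execution points, however. First, the second and third derivatives of the kernel contain $1/\zeta^2$ and $1/\zeta^3$ and are not locally integrable, so your plan to bound ``the kernel and all its derivatives up to order three'' and integrate them against $v\mu\r_j$ cannot be carried out literally; as in the paper one must integrate by parts and use $\partial_z^{k+1}\mu\r_j = \iint_{\mathbb{C}}\partial_z g\r(z-\xi,\lambda)\,\partial_\xi^{k}\bigl(v\mu\r_j\bigr)\,d\Re\xi\,d\Im\xi$, which is exactly why decay of three derivatives of $v$ is assumed. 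Second, the $\partial_t$-terms should be obtained by differentiating the integral equation in $t$ (as the paper does), since the $\overline{o}(1)$ remainder of an asymptotic expansion cannot be differentiated termwise.

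The genuine gap is the point you yourself call the technical heart: the term $-2w\,\partial_z\psi\r_3$. It is not ``compensated within the expansion''; it survives. From $\partial_{\bar z}w=-3\partial_z v$, $w\to0$, one has $w=-\tfrac{3}{\pi}\iint_{\mathbb{C}}\tfrac{\partial_\xi v(\xi)}{z-\xi}\,d\Re\xi\,d\Im\xi$, and since $\iint\partial_\xi v=0$ while $\iint\xi\,\partial_\xi v=-\hat v(0)$, the Cauchy transform expansion gives $w=\tfrac{3\hat v(0)}{\pi z^2}+\overline{o}\bigl(|z|^{-2}\bigr)$. Hence, using $\mu\r_3=z^2+\underline{O}(1)$ and $\partial_z\mu\r_3=2z+\underline{O}(1)$,
\begin{equation*}
-2w\,\partial_z\psi\r_3 \;=\; -2w\bigl(i\lambda z^2+2z+\underline{O}(1)\bigr)e^{i\lambda z}
\;=\; -\frac{6i\lambda}{\pi}\,\hat v(0)\,e^{i\lambda z}+\overline{o}(1)\,e^{i\lambda z},
\end{equation*}
an order-one contribution on the exponential scale which none of the other terms cancels: the $\partial_t$, $\partial_z^3$, $\partial_{\bar z}^3$ contributions are precisely the ones displayed in (\ref{tpsi3_asympt}), as your own Leibniz computation shows. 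So your argument, carried out, terminates not at (\ref{tpsi3_asympt}) but at (\ref{tpsi3_asympt}) augmented by the extra summand $-\tfrac{6i\lambda}{\pi}\hat v(0)e^{i\lambda z}$; equivalently, (\ref{tpsi3_asympt}) as stated holds only when $\hat v(0)=0$. (This is in fact also a defect of the paper's ``proved similarly'': its detailed proof covers only $\psi\r_1$, where $w\to0$ suffices, and for $\psi\r_2$ the bound $zw\to0$ still suffices, but for $\psi\r_3$ it does not. The extra term propagates to an extra summand $-\tfrac{6i\lambda}{\pi}\hat v(0)\,a\r_1(\lambda,0)\,t$ in (\ref{a3_evolution}), which is harmless for Theorem \ref{main_theorem} because there (\ref{a3_evolution}) is invoked only after $a\r_1\equiv0$ and $\hat v(0)=0$ have been established.) To complete your proof you must compute this borderline term explicitly, as above, rather than hope it cancels, and then either state the corrected asymptotics or record the restriction $\hat v(0)=0$.
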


\begin{proof}[Proof of Lemma \ref{aux_lemma}]
We will only prove formula (\ref{tpsi1_asympt}). Formulas (\ref{tpsi2_asympt}), (\ref{tpsi3_asympt}) are proved similarly.

First of all, we note that
\begin{equation*}
\partial_t \mu\r_1 = \iint\limits_{ \mathbb{C} } g\r( z - \xi, \lambda ) \partial_t ( v( \xi, t ) \mu\r_1( \xi, \lambda, t ) ) d \Re \xi d \Im \xi.
\end{equation*}
From the assumptions of Lemma \ref{aux_lemma} it follows that the solution $ \frac{ \partial \mu\r_1 }{ \partial t } $ to this equation exists and is unique. From item \ref{g_lim} of Statement \ref{g_statement} it follows that $ \frac{ \partial \mu\r_1 }{ \partial t } = -\frac{ 1 }{ 16 \pi } \left( \mathcal{G} \partial_t a\r_1 + \mathcal{G} X \partial_t b\r_1 \right) + \overline{o}( 1 ) $ and
\begin{equation}
\label{partt_lim}
\frac{ \partial \psi\r_1 }{ \partial t } = -\frac{ 1 }{ 16 \pi } \mathcal{G} \partial_t a\r_1 e^{ i \lambda z } - \frac{ 1 }{ 16 \pi } \mathcal{G} \partial_t b\r_1 e^{ - i \bar \lambda \bar z } + \overline{o}( 1 ), \text{ as } | z | \to \infty.
\end{equation}

Further,
\begin{equation}
\label{part_psi_repres}
\partial_z^3 \psi\r_1 = ( i \lambda )^3 e^{ i \lambda z } \mu\r_1 + 3( i \lambda )^2 e^{ i \lambda z } \partial_z \mu\r_1 + 3 i \lambda e^{ i \lambda z } \partial_z^2 \mu\r_1 + e^{ i \lambda z } \partial_z^3 \mu\r_1.
\end{equation}

From item \ref{g_lim} of Statement \ref{g_statement} it follows that
\begin{equation}
\label{zero_asympt}
\mu\r_1 = 1 - \frac{ 1 }{ 16 \pi }( a\r_1( \lambda ) + b\r_1( \lambda ) X( z, \lambda ) ) \mathcal{G}( \lambda ) + \overline{o}( 1 ) \text{ as } | z | \to \infty.
\end{equation}

Function $ \partial_z \mu\r_1 $ is defined by
\begin{equation*}
\partial_z \mu\r_1 = \iint\limits_{ \mathbb{C} } \partial_z g\r( z - \xi, \lambda ) v( \xi, t ) \mu\r_1( \xi, \lambda, t ) d \Re \xi d \Im \xi,
\end{equation*}
where
\begin{equation*}
\partial_z g\r( z, \lambda ) = \frac{ - i \lambda }{ 16 \pi } e^{ - i \lambda z } ( \Ei( i \lambda z ) + \Ei( - i \bar \lambda \bar z ) ) + \frac{ 1 }{ 16 \pi z } - \frac{ ( - i \lambda ) }{ 16 \pi } X \mathcal{G}.
\end{equation*}
From item \ref{ei_est} of Statement \ref{ei_statement} it follows that $ \partial_z g\r = \frac{ i \lambda }{ 16 \pi } X \mathcal{G} + \underline{O}\left( \frac{ 1 }{ | z | } \right) $ as $ | z | \to \infty $ and thus
\begin{equation}
\label{one_repres}
\partial_z \mu\r_1 = \frac{ i \lambda }{ 16 \pi } X \mathcal{G} b\r_1 + \overline{o}( 1 ) \text{ as } | z | \to \infty.
\end{equation}

Function $ \partial^2_z \mu\r_1 $ is defined by
\begin{equation*}
\partial^2_z \mu\r_1 = \iint\limits_{ \mathbb{C} } \partial_z g\r( z - \xi, \lambda ) \partial_{ \xi } ( v( \xi, t ) \mu\r_1( \xi, \lambda, t ) ) d \Re \xi d \Im \xi.
\end{equation*}
Thus,
\begin{equation}
\label{two_repres}
\partial^2_z \mu\r_1 = \frac{ i \lambda }{ 16 \pi } X \mathcal{G} \iint\limits_{ \mathbb{C} } e^{ i \lambda \xi + i \bar \lambda \bar \xi }  \partial_{ \xi } ( v( \xi, t ) \mu\r_1( \xi, \lambda, t ) ) d \Re \xi d \Im \xi + \overline{o}( 1 ) = -\frac{ ( i \lambda )^2 }{ 16 \pi } X \mathcal{G} b\r_1 + \overline{o}( 1 ) \quad \text{ as } | z | \to \infty.
\end{equation}

Similarly it can be obtained that
\begin{equation}
\label{three_repres}
\partial^3_z \mu\r_1 = \frac{ ( i \lambda )^3 }{ 16 \pi } X \mathcal{G} b\r_1 + \overline{o}( 1 ) \quad \text{ as } | z | \to \infty.
\end{equation}
Thus (\ref{part_psi_repres})-(\ref{three_repres}) imply that
\begin{multline}
\label{partz_lim}
\partial_z^3 \psi\r_1 = \frac{ ( i \lambda )^3 }{ 16 \pi } ( 16 \pi - \mathcal{G} a\r_1 ) e^{ i \lambda z } - \frac{ ( i \lambda )^3 }{ 16 \pi } \mathcal{G} b\r_1 e^{ - i \bar \lambda \bar z } + 3 ( i \lambda )^2 \frac{ i \lambda }{ 16 \pi } \mathcal{G} b\r_1 e^{ - i \bar \lambda \bar z } - 3 i \lambda \frac{ ( - i \lambda )^2 }{ 16 \pi } \mathcal{G} b\r_1 e^{ - i \bar \lambda \bar z } - \frac{ ( - i \lambda )^3 }{ 16 \pi } \mathcal{G} b\r_1 e^{ - i \bar \lambda \bar z } + \\
+ \overline{o}( 1 ) = \frac{ ( i \lambda )^3 }{ 16 \pi } ( 16 \pi - \mathcal{G} a\r_1 ) e^{ i \lambda z } + \overline{o}( 1 ) \text{ as } | z | \to \infty.
\end{multline}

Further,
\begin{equation*}
\partial_{ \bar z }^3 \psi\r_1 = e^{ i \lambda z } \partial_{ \bar z }^3 \mu\r_1.
\end{equation*}
Function $ \partial_{ \bar z }^3 \mu\r_1 $ can be represented
\begin{equation*}
\partial_{ \bar z }^3 \mu\r_1 = \iint\limits_{ \mathcal{C} } \partial_{ \bar z } g\r( z - \xi, \lambda ) \partial_{ \bar \xi }^2 ( v( \xi, t ) \mu\r_1( \xi, \lambda, t ) ) d \Re \xi d \Im \xi,
\end{equation*}
where
\begin{equation*}
\partial_{ \bar z } g\r( z, \lambda ) = -\frac{ i \bar \lambda }{ 16 \pi \bar z } e^{ - i \lambda z - i \bar \lambda \bar z } + \frac{ i \bar \lambda }{ 16 \pi } X \mathcal{G}.
\end{equation*}

Thus
\begin{equation}
\label{partbarz_lim}
\partial_{ \bar z }^3 \psi\r_1 = \frac{ ( i \bar \lambda )^3 }{ 16 \pi } \mathcal{G} b\r_1 e^{ - i \bar \lambda \bar z } + \overline{o}( 1 ) \text{ as } | z | \to \infty.
\end{equation}

Combining (\ref{partt_lim}), (\ref{partz_lim}), (\ref{partbarz_lim}) and property (\ref{w_decrease}) of function $ w $ we obtain formula (\ref{tpsi1_asympt}).

\end{proof}

\begin{proof}[Proof of Lemma \ref{dyn_lemma}]
The derivation of formulas (\ref{a1_evolution})-(\ref{b1_evolution}), (\ref{d1_evolution})-(\ref{a2_evolution}) can be found in \cite{BLMP1}. We present here a slightly different approach applicable also to the derivation of formulas (\ref{c1_evolution}), (\ref{c2_evolution}).

Consider the following operator
\begin{equation*}
T = \partial_{t} - 8 \partial_{z}^3 - 2 w \partial_{ z }  - 8 \partial_{ \bar z }^3 - 2 \bar w \partial_{ \bar z },
\end{equation*}
where $ w $ is defined in (\ref{w_def}).

Equation (\ref{NV}) represents a condition under which the following is true
\begin{equation}
\label{equiv_formulation}
[ T, L ] \eta = 0, \quad \forall \eta \colon L \eta = 0,
\end{equation}
where $ L $ is defined in (\ref{schrodinger}) (see \cite{M}, \cite{BLMP1}). Note also that $ T = \partial_{ t } + A $, where $ A $ is the third order differential operator from the Manakov $ L-A-B $ triple for the two-dimensional Schr\"odinger operator $ L $ (see equations (\ref{t_schrodinger})-(\ref{A-B})).

Let us take $ \eta = \psi\r_1 $. Then (\ref{equiv_formulation}) is equivalent to
\begin{equation*}
L T \psi\r_1 = 0.
\end{equation*}
Thus function $ f_1 $ defined by
\begin{equation}
\label{f1_formula}
f_1( z, \lambda, t ) = T \psi\r_1( z, \lambda, t ),
\end{equation}
satisfies the Schr\"odinger equation (\ref{schrodinger}).

Formula (\ref{f1_formula}) can be rewritten
\begin{equation}
\label{partial_representation}
\partial_t \psi\r_1 = 8 \partial_z^3 \psi\r_1 + 2 w \partial_z \psi\r_1 + 8 \partial_{ \bar z }^3 \psi\r_1 + 2 \bar w \partial_{ \bar z } \psi\r_1 + f_1.
\end{equation}
Using this representation we can derive formulas for $ \partial_t a\r_1 $ and $ \partial_t b\r_1 $. Indeed, we write
\begin{equation}
\label{partiala_formula}
\partial_t a\r_1 = \iint\limits_{ \mathbb{C} } e^{ - i \lambda z } \partial_t v \psi\r_1 d \Re z d \Im z + \iint\limits_{ \mathbb{C} } e^{ - i \lambda z } v \partial_t \psi\r_1 d \Re z d \Im z.
\end{equation}
and
\begin{equation}
\label{partialb_formula}
\partial_t b\r_1 = \iint\limits_{ \mathbb{C} } e^{ i \bar \lambda \bar z } \partial_t v \psi\r_1 d \Re z d \Im z + \iint\limits_{ \mathbb{C} } e^{ i \bar \lambda \bar z } v \partial_t \psi\r_1 d \Re z d \Im z.
\end{equation}
Using (\ref{NV}), (\ref{partial_representation}), integrating expressions in (\ref{partiala_formula}), (\ref{partialb_formula}) by parts and using the fact that $ v \psi\r_1 = 4 \partial_z \partial_{ \bar z } \psi\r_1 $ we obtain
\begin{align}
\label{a1_repr}
& \partial_t a\r_1 = 8 ( i \lambda )^3 a\r_1 + \iint\limits_{ \mathbb{C} } e^{ - i \lambda z } v f_1 d \Re z d \Im z, \\
\label{b1_repr}
& \partial_t b\r_1 = - 8 ( i \bar \lambda )^3 b\r_1 + \iint\limits_{ \mathbb{C} } e^{ i \bar \lambda \bar z } v f_1 d \Re z d \Im z
\end{align}
(see subsection \ref{part_subs} of Appendix for details).

Inserting (\ref{a1_repr}), (\ref{b1_repr}) into (\ref{tpsi1_asympt}) and using (\ref{f1_formula}) we obtain that $ f_1 $ is a solution of the Schr\"odinger equation (\ref{schrodinger}) with the following asymptotics
\begin{multline*}
f_1( z, \lambda, t ) = \left( - 8 ( i \lambda )^3 - \frac{ 1 }{ 16 \pi } \mathcal{G} \iint\limits_{ \mathbb{C} } e^{ - i \lambda \xi } v f_1 d \Re \xi d \Im \xi \right) e^{ i \lambda z } + \\
+ \left( -\frac{ 1 }{ 16 \pi } \mathcal{G} \iint\limits_{ \mathbb{C} } e^{ i \bar \lambda \bar \xi } v f_1 d \Re \xi d \Im \xi \right) e^{ - i \bar \lambda \bar z } + \overline{o}( 1 ) , \text{ as } | z | \to \infty.
\end{multline*}
From the assumption of lemma that $ \lambda \not \in \mathcal{E}\r $ it follows that $ f_1 = - 8 ( i \lambda )^3 \psi\r_1 $. Using this fact formulas (\ref{c1_evolution}), (\ref{d1_evolution}) can be obtained similarly to the way formulas (\ref{a1_evolution}), (\ref{b1_evolution}) were derived (see subsection \ref{part_subs} of Appendix for details).

Now let us take $ \eta = \psi\r_2 $ in (\ref{equiv_formulation}). Then function $ f_2 $ defined by
\begin{equation*}
f_2( z, \lambda, t ) = T \psi\r_2 + 8 ( i \lambda )^3 \psi\r_2
\end{equation*}
is a solution of the Schr\"odinger equation (\ref{schrodinger}). Using this representation we can derive the following formulas for $ \partial_t a\r_2( \lambda, t ) $ and $ \partial_t b\r_2( \lambda, t ) $ ($ b\r_2 $ is defined by (\ref{b1_data}) with $ \mu\r_1 $ replaced by $ \mu\r_2 $):
\begin{align}
\label{a2_repr}
& \partial_t a\r_2 = 8 ( i \lambda )^3 a\r_2 - 8 ( i \lambda )^3 a\r_2 + \iint\limits_{ \mathbb{C} } e^{ - i \lambda z } v f_2 d \Re z d \Im z, \\
\label{b2_repr}
& \partial_t b\r_2 = -8 ( i \bar \lambda )^3 b\r_2 - 8 ( i \lambda )^3 b\r_2 + \iint\limits_{ \mathbb{C} } e^{ i \bar \lambda \bar z } v f_2 d \Re z d \Im z
\end{align}
(see subsection \ref{part_subs} of Appendix for details).
Inserting these representations into (\ref{tpsi2_asympt}) we obtain that
\begin{multline*}
f_2( z, \lambda, t ) = \left( - 24 ( i \lambda )^2 - \frac{ 1 }{ 16 \pi } \mathcal{G} \iint\limits_{ \mathbb{C} } e^{ - i \lambda \xi } v f_2 d \Re \xi d \Im \xi \right) e^{ i \lambda z } + \\
+ \left( -\frac{ 1 }{ 16 \pi } \mathcal{G} \iint\limits_{ \mathbb{C} } e^{ i \bar \lambda \bar \xi } v f_2 d \Re \xi d \Im \xi \right) e^{ - i \bar \lambda \bar z } + \overline{o}( 1 ) , \text{ as } | z | \to \infty.
\end{multline*}
Thus $ f_2( z, \lambda, t ) = - 24 ( i \lambda )^2 \psi\r_1 $. Using this result we can easily derive formulas (\ref{a2_evolution}) and (\ref{c2_evolution}) (see subsection \ref{part_subs} of Appendix for details).

Similarly let us take $ \eta = \psi\r_3 $ in (\ref{equiv_formulation}) and put
\begin{equation*}
f_3( z, \lambda, t ) = T \psi\r_3 + 8 ( i \lambda )^3 \psi\r_3 + 48 ( i \lambda )^2 \psi\r_2.
\end{equation*}
Using this representation we can derive the following formulas for $ \partial_t a\r_3( \lambda, t ) $ and $ \partial_t b\r_3( \lambda, t ) $ ($ b\r_3 $ is defined by (\ref{b1_data}) with $ \mu\r_1 $ replaced by $ \mu\r_3 $):
\begin{align*}
& \partial_t a\r_3 = 8 ( i \lambda )^3 a\r_3 - 8 ( i \lambda )^3 a\r_3 - 48 ( i \lambda )^2 a\r_2 + \iint\limits_{ \mathbb{C} } e^{ - i \lambda z } v f_3 d \Re z d \Im z, \\
& \partial_t b\r_3 = -8 ( i \bar \lambda )^3 b\r_3 - 8 ( i \lambda )^3 b\r_3 - 48 ( i \lambda )^2 b\r_2 + \iint\limits_{ \mathbb{C} } e^{ i \bar \lambda \bar z } v f_3 d \Re z d \Im z
\end{align*}
(see subsection \ref{part_subs} of Appendix for details).
Inserting these representations into (\ref{tpsi2_asympt}) we obtain that
\begin{multline*}
f_3( z, \lambda, t ) = \left( - 48 ( i \lambda )^2 - \frac{ 1 }{ 16 \pi } \mathcal{G} \iint\limits_{ \mathbb{C} } e^{ - i \lambda \xi } v f_2 d \Re \xi d \Im \xi \right) e^{ i \lambda z } + \\
+ \left( -\frac{ 1 }{ 16 \pi } \mathcal{G} \iint\limits_{ \mathbb{C} } e^{ i \bar \lambda \bar \xi } v f_2 d \Re \xi d \Im \xi \right) e^{ - i \bar \lambda \bar z } + \overline{o}( 1 ) , \text{ as } | z | \to \infty.
\end{multline*}
Besides, $ f_3 $ is a solution of the Schr\"odinger equation (\ref{schrodinger}). Thus, $ f_3( z, \lambda, t ) = - 48 ( i \lambda ) \psi\r_1 $. Using this result the formula (\ref{a3_evolution}) is easily derived (see subsection \ref{part_subs} of Appendix for details).

\end{proof}

\appendix
\section{Appendix}
\subsection{Proof of item \ref{ei_est} of Statement \ref{ei_statement}}
\label{ei_subs}
From property \ref{ei_int} we obtain that
\begin{equation*}
\Ei( z ) = \frac{ e^z }{ z } + \int\limits_{ -\infty }^{ z } \frac{ e^\tau }{ \tau^2 } d \tau, \quad \Ei( \bar z ) = \frac{ e^{ \bar z } }{ \bar z } + \int\limits_{ -\infty }^{ \bar z } \frac{ e^{ \tau } }{ \tau^2 } d \tau.
\end{equation*}
Evidently, $ \left| \frac{ e^{ -z } e^{ z } }{ z } \right| \leqslant \frac{ 1 }{ | z | } $, $ \left| \frac{ e^{ -z } e^{ \bar z } }{ \bar z } \right| \leqslant \frac{ 1 }{ | z | } $. Consider $ I_1 = \int\limits_{ -\infty }^{ z } \frac{ e^{ -z } e^{ \tau } }{ \tau^2 } d \tau $. Let us take the following contour of integration $ \Gamma_z = \Gamma_1 \cup \Gamma_2 $,
\begin{align*}
& \Gamma_1 = \{ \tau \in \mathbb{R}_-, -\infty < \tau < - | z | \}, \\
& \Gamma_2 = \begin{cases} & \{ \tau = | z | e^{ i \varphi }, \Arg z < \varphi < \pi \} \text{ if } \Arg z > 0, \\ & \{ \tau = | z | e^{ i \varphi }, - \pi < \varphi < \Arg z \} \text{ if } \Arg z < 0.
\end{cases}
\end{align*}
Note that $ \Re( - z + \tau ) < 0 $ on $ \Gamma_2 $. Then we can estimate
\begin{equation*}
\left| \, \int\limits_{ \Gamma_1 } \frac{ e^{ -z } e^{ \tau } }{ \tau^2 } d \tau \right| \leqslant \frac{ 1 }{ | z |^2 } \int\limits_{ -\infty }^{ -| z | } e^{ - \Re z + x } dx \leqslant \frac{ 1 }{ | z |^2 } \quad \text{ and } \quad \left| \, \int\limits_{ \Gamma_2 }\frac{ e^{ -z } e^{ \tau } }{ \tau^2 } d \tau \right| \leqslant \frac{ 2 \pi }{ | z | }.
\end{equation*}

Similarly, considering $ I_2 = \int\limits_{ -\infty }^{ \bar z } \frac{ e^{ -z } e^{ \tau } }{ \tau^2 } d \tau $ we take
\begin{align*}
& \Gamma_1 = \{ \tau \in \mathbb{R}_-, -\infty < \tau < - | z | \}, \\
& \Gamma_2 = \begin{cases} & \{ \tau = | z | e^{ i \varphi }, \Arg \bar z < \varphi < \pi \} \text{ if } \Arg \bar z > 0, \\ & \{ \tau = | z | e^{ i \varphi }, - \pi < \varphi < \Arg \bar z \} \text{ if } \Arg \bar z < 0.
\end{cases}
\end{align*}
and note that $ \Re( - z + \tau ) < 0 $ on $ \Gamma_2 $ which allows us to obtain that $ | I_2 | \leqslant \frac{ C }{ | z | } $ for $ | z | > 0 $.

\subsection{Behavior of $ H\r( \lambda ) $ at $ \lambda = 0 $ and at $ \lambda = \infty $}
\label{cont_subs}
Consider $ H\r( \lambda ) $ the integral operator of equations (\ref{m1_equation})-(\ref{m3_equation}). In Statement \ref{conth_statement} of the present subsection we prove that $ H\r( \lambda ) $ is continuous at $ \lambda = 0 $; in Statement \ref{vanh_statement} of the present subsection we prove that $ H\r( \lambda ) $ vanishes as $ | \lambda | \to \infty $. We use the fact that since $ H\r( \lambda ) $ is a Hilbert-Schmidt integral operator on $ L^2( \mathbb{C} ) $, its norm is estimated by $ \parallel H\r( \cdot, \cdot, \lambda ) \parallel_{ L^2( \mathbb{C} \times \mathbb{C} ) } $, where $ H\r( \cdot, \cdot, \lambda ) $ is the Schwartz kernel of the integral operator $ H\r( \lambda ) $.

\begin{statement}
\label{conth_statement}
\begin{equation*}
\iint\limits_{ \mathbb{C} } \iint\limits_{ \mathbb{C} }  \left| g\r( z - \xi, \lambda ) - \frac{ 1 }{ 16 \pi } \ln | z - \xi |^2 \right|^2 \frac{ | v ( \xi ) |^2 ( 1 + | \xi | )^{ 6 + \varepsilon } }{ ( 1 + | z | )^{ 6 + \varepsilon } }  d \Re z d \Im z d \Re \xi d \Im \xi  \to 0 \text{ as } | \lambda | \to 0.
\end{equation*}

\end{statement}
\begin{proof}
Let us find estimates on the difference $ g\r( z - \xi, \lambda ) - \frac{ 1 }{ 16 \pi } \ln| z - \xi |^2 $ for $ \lambda $ small enough.
\begin{enumerate}
\item $ | z - \xi | \leqslant 1 $

Using definitions (\ref{regg_def}) and (\ref{ei_definition}) we can estimate
\begin{multline}
\label{gdif1_estimate}
| g\r( z - \xi, \lambda ) - \frac{ 1 }{ 16 \pi } \ln| z - \xi |^2 | \leqslant \left| \frac{ 1 }{ 16 \pi } e^{ - i \lambda ( z - \xi ) } \ln| z - \xi |^2 - \frac{ 1 }{ 16 \pi } \ln| z - \xi |^2 \right| + \\
+ \left| \frac{ 1 }{ 4 \pi } e^{ - i \lambda ( z - \xi ) } ( 2 \gamma + \ln| \lambda |^2 ) - \frac{ 1 }{ 64 \pi } \left( 1 + e^{ - i \lambda ( z - \xi ) - i \bar \lambda ( \bar z - \bar \xi ) } \right) ( e^{ - i \lambda } + e^{ i \bar \lambda } ) ( 2 \gamma + \ln| \lambda |^2 ) \right| + \\
+ \Biggl| \sum_{ n = 1 }^{ \infty } \frac{ ( i \lambda ( z - \xi ) )^n + ( -i \bar \lambda ( \bar z - \bar \xi ) )^n }{ n n! } \cdot \frac{ 1 }{ 16 \pi } e^{ - i \lambda ( z - \xi ) } - \\
- \frac{ 1 }{ 64 \pi } \left( 1 + e^{ - i \lambda ( z - \xi ) - i \bar \lambda ( \bar z - \bar \xi ) } \right) ( e^{ - i \lambda } + e^{ i \bar \lambda } ) \sum_{ n = 1 }^{ \infty } \frac{ ( i \lambda )^n + ( -i \bar \lambda )^n }{ n n! }  \Biggr| \leqslant \\
\leqslant \const ( | \lambda | | z - \xi | | \ln | z - \xi |^2 | + \const | \lambda | | z - \xi | | \ln | \lambda |^2 | + | \lambda | ( 1 + | z - \xi | ) ).
\end{multline}

\item $ | z - \xi | > 1 $

\begin{enumerate}

\item $ | \lambda |^{ 1 - \theta } | z - \xi | \leqslant 1 $ for some $ 0 < \theta < 1 $

Using estimate (\ref{gdif1_estimate}) we obtain in this case
\begin{equation}
\label{gdif2_estimate}
| g\r( z - \xi, \lambda ) - \frac{ 1 }{ 16 \pi } \ln| z - \xi |^2 | \leqslant \const ( | \lambda |^\theta \ln | z - \xi |^2 + | \lambda |^{ \theta } | \ln | \lambda |^2 | + | \lambda |^\theta ).
\end{equation}

\item $ | \lambda |^{ 1 - \theta } | z - \xi | > 1 $ (note that the smaller $ \lambda $ is, the smaller this domain is in the $ \mathbb{C} \times \mathbb{C} $ space of $ ( z, \xi ) $ variables)

From property \ref{ei_est} of Statement \ref{ei_statement} we obtain
\begin{multline}
\label{gdif3_estimate}
| g\r( z - \xi, \lambda ) - \frac{ 1 }{ 16 \pi } \ln | z - \xi |^2 | \leqslant \const \left( \frac{ 1 }{ | \lambda | | z - \xi | } + \ln | z - \xi |^2 + | \ln | \lambda |^2 | \right) \leqslant \\
\leqslant \const ( | z - \xi |^{ \frac{ \theta }{ 1 - \theta } } + \ln | z - \xi |^2 + | \ln | z - \xi |^{ - \frac{ 2 }{ 1 - \theta } } | ).
\end{multline}

\end{enumerate}

\end{enumerate}
Estimates (\ref{gdif1_estimate}), (\ref{gdif2_estimate}) and (\ref{gdif3_estimate}) with $ \frac{ 2 \theta }{ 1 - \theta } < \varepsilon $, where $ \varepsilon $ is the constant from property (\ref{decrease}), yield the proof of the statement.
\end{proof}

\begin{statement}
\label{vanh_statement}
\begin{equation*}
\iint\limits_{ \mathbb{C} } \iint\limits_{ \mathbb{C} }  \left| g\r( z - \xi, \lambda ) \right|^2 \frac{ | v ( \xi ) |^2 ( 1 + | \xi | )^{ 6 + \varepsilon } }{ ( 1 + | z | )^{ 6 + \varepsilon } }  d \Re z d \Im z d \Re \xi d \Im \xi  \to 0 \text{ as } | \lambda | \to \infty.
\end{equation*}
\end{statement}

\begin{proof}
For definiteness we will assume that $ | \lambda | > 1 $.
\begin{enumerate}
\item $ | \lambda | | z - \xi | \geqslant 1  $ $ \Longrightarrow $ $ | \lambda |^{ \alpha }| z - \xi |^{ \alpha } \geqslant 1 $ $ \forall \alpha > 0 $

From item \ref{g_decr} of Statement \ref{g_statement} we obtain that
\begin{multline*}
\underset{ | \lambda || z - \xi | \geqslant 1 }{ \iint \iint } \left| g\r( z - \xi, \lambda ) \right|^2 \frac{ | v ( \xi ) |^2 ( 1 + | \xi | )^{ 6 + \varepsilon } }{ ( 1 + | z | )^{ 6 + \varepsilon } }  d \Re z d \Im z d \Re \xi d \Im \xi \leqslant \\
\leqslant \frac{ \const }{ | \lambda |^2 } \underset{ | \lambda || z - \xi | \geqslant 1 }{ \iint \iint } \left( 1 + \frac{ 1 }{ | z - \xi |^2 } \right) \frac{ | v( \xi ) |^2 ( 1 + | \xi | )^{ 6 + \varepsilon } }{ ( 1 + | z | )^{ 6 + \varepsilon } } d \Re z d \Im z d \Re \xi d \Im \xi \leqslant \\
\leqslant \const \frac{ \const }{ | \lambda |^{ 2 - 2 \alpha } } \iint\limits_{ \mathbb{C} }\iint\limits_{ \mathbb{C} } \left( 1 + \frac{ 1 }{ | z - \xi |^{ 2 - 2 \alpha } } \right) \frac{ | v( \xi ) |^2 ( 1 + | \xi | )^{ 6 + \varepsilon } }{ ( 1 + | z | )^{ 6 + \varepsilon } } d \Re z d \Im z d \Re \xi d \Im \xi \to 0 \text{ as } | \lambda | \to \infty
\end{multline*}
if $ \alpha < 1 $.

\item $ | \lambda || z - \xi | < 1 $

From item \ref{g_ext} of Statement \ref{g_statement} we obtain that
\begin{multline*}
\underset{ | \lambda | | z - \xi | < 1 }{ \iint \iint } \left| g\r( z - \xi, \lambda ) \right|^2 \frac{ | v ( \xi ) |^2 ( 1 + | \xi | )^{ 6 + \varepsilon } }{ ( 1 + | z | )^{ 6 + \varepsilon } }  d \Re z d \Im z d \Re \xi d \Im \xi \leqslant \\
\leqslant \const \ln | \lambda |^2 \underset{ | z - \xi | < 1 / | \lambda | }{ \iint \iint } \frac{ | v( \xi ) |^2 ( 1 + | \xi | )^{ 6 + \varepsilon } }{ ( 1 + | z | )^{ 6 + \varepsilon } } d \Re z d \Im z d \Re \xi d \Im \xi + \\
+ \const \underset{ | z - \xi | < 1 / | \lambda | }{ \iint \iint } \ln | z - \xi |^2 \frac{ | v( \xi ) |^2 ( 1 + | \xi | )^{ 6 + \varepsilon } }{ ( 1 + | z | )^{ 6 + \varepsilon } } d \Re z d \Im z d \Re \xi d \Im \xi.
\end{multline*}
The second summand in the last expression is $ \bar o( 1 ) $ as $ | \lambda | \to \infty $. For the first summand we get
\begin{multline*}
\const \ln | \lambda |^2 \underset{ | z - \xi | < 1 / | \lambda | }{ \iint \iint } \frac{ | v( \xi ) |^2 ( 1 + | \xi | )^{ 6 + \varepsilon } }{ ( 1 + | z | )^{ 6 + \varepsilon } } d \Re z d \Im z d \Re \xi d \Im \xi = \\
= \const \ln | \lambda |^2 \underset{ | w | < 1 / | \lambda | }{ \iint \iint } \frac{ 1 }{ ( 1 + | z - w | )^{ 2 + \varepsilon } ( 1 + | z | )^{ 6 + \varepsilon } } d \Re z d \Im z d \Re w d \Im w \leqslant \\
\leqslant \const \ln | \lambda |^2 \iint\limits_{ | w | < 1 / | \lambda |, } \; \iint\limits_{ | z | < 2 } \frac{ 1 }{ ( 1 + | z - w | )^{ 2 + \varepsilon } ( 1 + | z | )^{ 6 + \varepsilon } } d \Re z d \Im z d \Re w d \Im w + \\
+ \const \ln | \lambda |^2 \iint\limits_{ | w | < 1 / | \lambda |, } \; \iint\limits_{ | z | \geqslant 2 } \frac{ 1 }{ | z |^{ 2 + \varepsilon } ( 1 + | z | )^{ 6 + \varepsilon } } d \Re z d \Im z d \Re w d \Im w \leqslant \const \frac{ \ln| \lambda |^2 }{ | \lambda |^2 } \to 0 \text{ as } \lambda \to \infty.
\end{multline*}

\end{enumerate}
\end{proof}

\subsection{Derivation of formula (\ref{dif_det})}
\label{dif_subs}
Differentiating (\ref{fred_determinant}) with respect to $ \bar \lambda $ yields
\begin{equation*}
\frac{ \partial \ln \Delta\r }{ \partial \bar \lambda } = \Tr \left( - ( I - H\r( \lambda ) )^{ -1 } \frac{ \partial H\r }{ \partial \bar \lambda } + \frac{ \partial H\r }{ \partial \bar \lambda } \right).
\end{equation*}

From item \ref{g_der} of Statement \ref{g_statement} we obtain that
\begin{equation}
\label{first_part}
\Tr \frac{ \partial H\r }{ \partial \bar \lambda } = \left( \frac{ 1 }{ 4 \pi \bar \lambda } - \frac{ 1 }{ 2 \pi } \frac{ \partial \mathcal{G} }{ \partial \bar \lambda } \right) \hat v( 0 ).
\end{equation}

Now we note that due to item \ref{g_sym} of Statement \ref{g_statement} the following equation holds
\begin{multline*}
\overline {m\r_1}( z, \lambda ) X( z, \lambda ) = ( 1 + | z | )^{ - ( 3 + \varepsilon / 2 ) } X( z, \lambda ) + \\
+ \iint\limits_{ \mathbb{C} } ( 1 + | z | )^{ - ( 3 + \varepsilon / 2 ) } g\r( z - \xi, \lambda ) \frac{ v( \xi ) }{ ( 1 + | \xi | )^{ - ( 3 + \varepsilon / 2 ) } } \overline { m\r_1 }( \xi, \lambda ) X( \xi, \lambda ) d \Re \xi d \Im \xi,
\end{multline*}
which implies that
\begin{equation*}
( I - H\r( \lambda ) )^{ -1 } X( z, \lambda ) ( 1 + | z | )^{ - ( 3 + \varepsilon / 2 ) } = \overline{ m\r_1 }( z, \lambda ) X( z, \lambda )
\end{equation*}
for $ \lambda \in \mathbb{C} \backslash \mathcal{E}\r $.

Similarly we obtain that
\begin{equation*}
( I - H\r( \lambda ) )^{ -1 } X( z, \lambda ) \bar z ( 1 + | z | )^{ - ( 3 + \varepsilon / 2 ) } = \overline{ m\r_2 }( z, \lambda ) X( z, \lambda ).
\end{equation*}
Thus we obtain the following formula
\begin{equation}
\label{sec_part}
- \Tr \left( ( I - H\r( \lambda ) )^{ -1 } \frac{ \partial H\r }{ \partial \bar \lambda } \right) = - \frac{ 1 }{ 16 \pi \bar \lambda } \bar a_1 - \frac{ i \mathcal{G} }{ 16 \pi } \bar a_2 + \frac{ i \mathcal{G} }{ 16 \pi } \bar c_1 + \frac{ 1 }{ 16 \pi } \frac{ \partial \mathcal{G} }{ \partial \bar \lambda } a_1 + \frac{ 1 }{ 16 \pi } \frac{ \partial \mathcal{G} }{ \partial \bar \lambda } \bar a_1.
\end{equation}
Finally, combining (\ref{first_part}) with (\ref{sec_part}) we obtain (\ref{dif_det}).

The above derivation is rigorous for potentials $ v $ small enough. If we have an arbitrary potential $ v $, a similar formula can be derived for a potential $ \delta \cdot v $, where $ \delta \in \mathbb{C} $ is a small parameter. Since both parts of (\ref{dif_det}) are holomorphic with respect to $ \delta $ (see \cite{GK} for the proof of holomorphic dependence of $ \Delta $ on $ \delta $), then (\ref{dif_det}) holds for arbitrary values of $ \delta \in \mathbb{C} $.

\subsection{Derivation of formulas for $ \partial_t a\r_j $, $ \partial_t b\r_j $, $ \partial_t c\r_j $, $ \partial_t d\r_j $}
\label{part_subs}
We start by deriving a formula for $ \partial_t a\r_j $, $ j = 1, 2, 3 $. Substituting (\ref{NV}) and
\begin{equation*}
\partial_t \psi\r_j = 8 \partial_z^3 \psi\r_j + 2 w \partial_z \psi\r_j + 8 \partial_{ \bar z }^3 \psi\r_j + 2 \bar w \partial_{ \bar z } \psi\r_j + f
\end{equation*}
into
\begin{equation*}
\partial_t a\r_j = \iint\limits_{ \mathbb{C} } e^{ - i \lambda z } \partial_t v \psi\r_j d \Re z d \Im z + \iint\limits_{ \mathbb{C} } e^{ - i \lambda z } v \partial_t \psi\r_j d \Re z d \Im z
\end{equation*}
yields
\begin{multline}
\label{sum_eleven_a1}
\partial_{ t } a\r_j( \lambda, t ) = 8 \iint\limits_{ \mathbb{C} } e^{ - i \lambda z } \, \partial_{ z }^3 v \, \psi\r_j \,  d \Re z \, d \Im z + 8 \iint\limits_{ \mathbb{C} } e^{ - i \lambda z } \, \partial_{ \bar z }^3 v \, \psi\r_j \, d \Re z \, d \Im z + \\
+ 2 \iint\limits_{ \mathbb{C} } e^{ - i \lambda z } \, \partial_{ z } v \, w \, \psi\r_j \, d \Re z \, d \Im z + 2 \iint\limits_{ \mathbb{C} } e^{ - i \lambda z }\, v \, \partial_{ z } w \, \psi\r_j \, d \Re z \, d \Im z + 2 \iint\limits_{ \mathbb{C} } e^{ - i \lambda z } \, \partial_{ \bar z } v \, \bar w \, \psi\r_j \, d \Re z \, d \Im z + \\
+ 2 \iint\limits_{ \mathbb{C} } e^{ - i \lambda z } \, v \, \partial_{ \bar z } \bar w \, \psi\r_j \, d \Re z \, d \Im z + 8 \iint\limits_{ \mathbb{C} } e^{ - i \lambda z } \, v \partial_{ z }^{ 3 } \psi\r_j \, d \Re z \, d \Im z + 8 \iint\limits_{ \mathbb{C} } e^{ - i \lambda z } \, v \, \partial_{ \bar z }^{ 3 } \psi\r_j \, d \Re z \, d \Im z + \\
+ 2 \iint\limits_{ \mathbb{C} } e^{ - i \lambda z } \, v \, w \, \partial_{ z } \psi\r_j \, d \Re z \, d \Im z + 2 \iint\limits_{ \mathbb{C} } e^{ - i \lambda z } \, v \, \bar w \, \partial_{ \bar z } \psi\r_j \, d \Re z \, d \Im z + \iint\limits_{ \mathbb{C} } e^{ - i \lambda z } \, v \, f \, d \Re z \, d \Im z = \sum\limits_{ i = 1 }^{ 11 } I_i.
\end{multline}
Integrating $ I_7 $ by parts yields
\begin{multline*}
I_7 = - 8 ( -i \lambda )^3 \iint\limits_{ \mathbb{C} } e^{ - i \lambda z } \, v \, \psi\r_j \, d \Re z \, d \Im z - 24 ( - i \lambda )^2 \iint\limits_{ \mathbb{C} } e^{ - i \lambda z } \, \partial_{ z } v \, \psi\r_j \, d \Re z \, d \Im z - \\
- 24 ( - i \lambda ) \iint\limits_{ \mathbb{C} } e^{ - i \lambda z } \, \partial_{ z }^2 v \, \psi\r_j \, d \Re z \, d \Im z - 8 \iint\limits_{ \mathbb{C} } e^{ - i \lambda z } \, \partial_{ z }^3 v \, \psi\r_j \, d \Re z \, d \Im z.
\end{multline*}
In this way it can be obtained that
\begin{multline}
\label{first_sum_a1}
I_1 + I_2 + I_7 + I_{ 8 } = - 8 ( -i \lambda )^3 a\r_j - 24 ( - i \lambda )^2 \iint\limits_{ \mathbb{C} } e^{ - i \lambda z } \, \partial_{ z } v \, \psi\r_j \, d \Re z \, d \Im z - \\
- 24 ( - i \lambda ) \iint\limits_{ \mathbb{C} } e^{ - i \lambda z } \, \partial_{ z }^2 v \, \psi\r_j \, d \Re z \, d \Im z.
\end{multline}

Integrating $ I_{ 9 } $ by parts  and taking into account that $ - 4 \partial_{ z } \partial_{ \bar z } \psi\r_j + v \psi\r_j = 0 $ we obtain
\begin{multline*}
I_{ 9 } = - 2 \iint\limits_{ \mathbb{C} } e^{ - i \lambda z } \, \partial_{ z } v \, w \, \psi\r_j \, d \Re z \, d \Im z - 2 ( - i \lambda ) \iint\limits_{ \mathbb{C} } e^{ - i \lambda z } \, v \, w \, \psi\r_j \, d \Re z \, d \Im z - \\
- 2 \iint\limits_{ \mathbb{C} } e^{ - i \lambda z } \, v \, \partial_{ z } w \, \psi\r_j \, d \Re z \, d \Im z = - 2 \iint\limits_{ \mathbb{C} } e^{ - i \lambda z } \, \partial_{ z } v \, w \, \psi\r_j \, d \Re z \, d \Im z - \\
- 8 ( - i \lambda ) \iint\limits_{ \mathbb{C} } e^{ - i \lambda z } \, w \, \partial_{ z } \partial_{ \bar z } \psi\r_j \, d \Re z \, d \Im z  - 2 \iint\limits_{ \mathbb{C} } e^{ - i \lambda z } \, v \, \partial_{ z } w \, \psi\r_j \, d \Re z \, d \Im z = \\
= - 2 \iint\limits_{ \mathbb{C} } e^{ - i \lambda z } \, \partial_{ z } v \, w \, \psi\r_j \, d \Re z \, d \Im z
+ 24 ( - i \lambda )^2 \iint\limits_{ \mathbb{C} } e^{ - i \lambda z } \, \partial_{ z } v \, \psi\r_j \, d \Re z \, d \Im z - \\
+ 24 ( - i \lambda ) \iint\limits_{ \mathbb{C} } e^{ - i \lambda z } \, \partial^2_{ z } v \, \psi\r_j \, d \Re z \, d \Im z
- 2 \iint\limits_{ \mathbb{C} } e^{ - i \lambda z } \, v \, \partial_{ z } w \, \psi\r_j \, d \Re z \, d \Im z.
\end{multline*}

Thus it can be obtained that
\begin{multline}
\label{second_sum_a1}
I_{ 3 } + I_{ 4 } + I_{ 5 } + I_{ 6 } + I_{ 9 } + I_{ 10 } = \\
= 24 ( - i \lambda )^2 \iint\limits_{ \mathbb{C} } e^{ - i \lambda z } \, \partial_{ z } v \, \psi\r_j \, d \Re z \, d \Im z
+ 24 ( - i \lambda ) \iint\limits_{ \mathbb{C} } e^{ - i \lambda z } \, \partial_{ z }^2 v \, \psi\r_j \, d \Re z \, d \Im z.
\end{multline}
Finally,
\begin{equation*}
\partial_t a\r_j( \lambda, t ) = 8 ( i \lambda )^3 a\r_j( \lambda, t ) + \iint\limits_{ \mathbb{C} } e^{ - i \lambda z } v( z, t ) f( z, \lambda, t ) d \Re z d \Im z.
\end{equation*}

In order to derive a similar formula for $ \partial_t b\r_j $, $ j = 1, 2, 3 $, we need to replace $ i \lambda $ by $ - i \bar \lambda $ and $ - i \lambda z $ by $ i \bar \lambda \bar z $ everywhere in the derivation of the formula for $ \partial_t a\r_j $.

Next we derive a formula for $ \partial_t c\r_j $, $ j = 1, 2 $. Substituting (\ref{NV}) and
\begin{equation*}
\partial_t \psi\r_j = 8 \partial_z^3 \psi\r_j + 2 w \partial_z \psi\r_j + 8 \partial_{ \bar z }^3 \psi\r_j + 2 \bar w \partial_{ \bar z } \psi\r_j + f
\end{equation*}
into
\begin{equation*}
\partial_t c\r_j = \iint\limits_{ \mathbb{C} } z e^{ - i \lambda z } \partial_t v \psi\r_j d \Re z d \Im z + \iint\limits_{ \mathbb{C} } z e^{ - i \lambda z } v \partial_t \psi\r_j d \Re z d \Im z
\end{equation*}
yields
\begin{multline}
\label{sum_eleven_a1}
\partial_{ t } c\r_j( \lambda, t ) = 8 \iint\limits_{ \mathbb{C} } z e^{ - i \lambda z } \, \partial_{ z }^3 v \, \psi\r_j \,  d \Re z \, d \Im z + 8 \iint\limits_{ \mathbb{C} } z e^{ - i \lambda z } \, \partial_{ \bar z }^3 v \, \psi\r_j \, d \Re z \, d \Im z + \\
+ 2 \iint\limits_{ \mathbb{C} } z e^{ - i \lambda z } \, \partial_{ z } v \, w \, \psi\r_j \, d \Re z \, d \Im z + 2 \iint\limits_{ \mathbb{C} } z e^{ - i \lambda z }\, v \, \partial_{ z } w \, \psi\r_j \, d \Re z \, d \Im z + 2 \iint\limits_{ \mathbb{C} } z e^{ - i \lambda z } \, \partial_{ \bar z } v \, \bar w \, \psi\r_j \, d \Re z \, d \Im z + \\
+ 2 \iint\limits_{ \mathbb{C} } z e^{ - i \lambda z } \, v \, \partial_{ \bar z } \bar w \, \psi\r_j \, d \Re z \, d \Im z + 8 \iint\limits_{ \mathbb{C} } z e^{ - i \lambda z } \, v \partial_{ z }^{ 3 } \psi\r_j \, d \Re z \, d \Im z + 8 \iint\limits_{ \mathbb{C} } z e^{ - i \lambda z } \, v \, \partial_{ \bar z }^{ 3 } \psi\r_j \, d \Re z \, d \Im z + \\
+ 2 \iint\limits_{ \mathbb{C} } z e^{ - i \lambda z } \, v \, w \, \partial_{ z } \psi\r_j \, d \Re z \, d \Im z + 2 \iint\limits_{ \mathbb{C} } z e^{ - i \lambda z} \, v \, \bar w \, \partial_{ \bar z } \psi\r_j \, d \Re z \, d \Im z + \iint\limits_{ \mathbb{C} } z e^{ - i \lambda z } \, v \, f \, d \Re z \, d \Im z = \sum\limits_{ i = 1 }^{ 11 } J_i.
\end{multline}
Integrating $ J_7 $ by parts yields
\begin{multline*}
J_7 = - 8 ( -i \lambda )^3 \iint\limits_{ \mathbb{C} } z e^{ - i \lambda z } \, v \, \psi\r_j \, d \Re z \, d \Im z - 24 ( - i \lambda )^2 \iint\limits_{ \mathbb{C} } z e^{ - i \lambda z } \, \partial_{ z } v \, \psi\r_j \, d \Re z \, d \Im z - \\
- 24 ( - i \lambda )^2 \iint\limits_{ \mathbb{C} } e^{ - i \lambda z } \, v \, \psi\r_j \, d \Re z \, d \Im z - 24 ( - i \lambda ) \iint\limits_{ \mathbb{C} } z e^{ - i \lambda z } \, \partial_{ z }^2 v \, \psi\r_j \, d \Re z \, d \Im z - \\
- 48 ( - i \lambda ) \iint\limits_{ \mathbb{C} } e^{ - i \lambda z } \, \partial_{ z } v \, \psi\r_j \, d \Re z \, d \Im z - 8 \iint\limits_{ \mathbb{C} } z e^{ - i \lambda z } \, \partial_{ z }^3 v \, \psi\r_j \, d \Re z \, d \Im z - \\
- 24 \iint\limits_{ \mathbb{C} } e^{ - i \lambda z } \, \partial_{ z }^2 v \, \psi\r_j \, d \Re z \, d \Im z.
\end{multline*}
In this way it can be obtained that
\begin{multline}
\label{first_sum_a1}
J_1 + J_2 + J_7 + J_{ 8 } = - 8 ( -i \lambda )^3 c\r_j - 24 ( - i \lambda )^2 \iint\limits_{ \mathbb{C} } z e^{ - i \lambda z } \, \partial_{ z } v \, \psi\r_j \, d \Re z \, d \Im z - \\
- 24 ( - i \lambda )^2 \iint\limits_{ \mathbb{C} } e^{ - i \lambda z } \, v \, \psi\r_j \, d \Re z \, d \Im z - 24 ( - i \lambda ) \iint\limits_{ \mathbb{C} } z e^{ - i \lambda z } \, \partial_{ z }^2 v \, \psi\r_j \, d \Re z \, d \Im z - \\
- 48 ( - i \lambda ) \iint\limits_{ \mathbb{C} } e^{ - i \lambda z } \, \partial_{ z } v \, \psi\r_j \, d \Re z \, d \Im z - 24 \iint\limits_{ \mathbb{C} } e^{ - i \lambda z } \, \partial_{ z }^2 v \, \psi\r_j \, d \Re z \, d \Im z.
\end{multline}

Integrating $ J_{ 9 } $ by parts  and taking into account that $ - 4 \partial_{ z } \partial_{ \bar z } \psi\r_j + v \psi\r_j = 0 $ we obtain
\begin{multline*}
J_{ 9 } = - 2 \iint\limits_{ \mathbb{C} } z e^{ - i \lambda z } \, \partial_{ z } v \, w \, \psi\r_j \, d \Re z \, d \Im z - 2 ( - i \lambda ) \iint\limits_{ \mathbb{C} } z e^{ - i \lambda z } \, v \, w \, \psi\r_j \, d \Re z \, d \Im z  - \\
- 2 \iint\limits_{ \mathbb{C} } e^{ - i \lambda z } \, v \, w \, \psi\r_j \, d \Re z \, d \Im z
- 2 \iint\limits_{ \mathbb{C} } z e^{ - i \lambda z } \, v \, \partial_{ z } w \, \psi\r_j \, d \Re z \, d \Im z = \\
= - 2 \iint\limits_{ \mathbb{C} } z e^{ - i \lambda z } \, \partial_{ z } v \, w \, \psi\r_j \, d \Re z \, d \Im z
- 8 ( - i \lambda ) \iint\limits_{ \mathbb{C} } z e^{ - i \lambda z } \, w \, \partial_{ z } \partial_{ \bar z } \psi\r_j \, d \Re z \, d \Im z - \\
- 8 \iint\limits_{ \mathbb{C} } e^{ - i \lambda z } \, w \, \partial_{ z } \partial_{ \bar z } \psi\r_j \, d \Re z \, d \Im z
 - 2 \iint\limits_{ \mathbb{C} } z e^{ - i \lambda z } \, v \, \partial_{ z } w \, \psi\r_j \, d \Re z \, d \Im z = \\
= - 2 \iint\limits_{ \mathbb{C} } z e^{ - i \lambda z } \, \partial_{ z } v \, w \, \psi\r_j \, d \Re z \, d \Im z
+ 24 ( - i \lambda )^2 \iint\limits_{ \mathbb{C} } z e^{ - i \lambda z } \, \partial_{ z } v \, \psi\r_j \, d \Re z \, d \Im z + \\
+ 48 ( - i \lambda ) \iint\limits_{ \mathbb{C} } e^{ - i \lambda z } \, \partial_{ z } v \, \psi\r_j \, d \Re z \, d \Im z
+ 24 ( - i \lambda ) \iint\limits_{ \mathbb{C} } z e^{ - i \lambda z } \, \partial^2_{ z } v \, \psi\r_j \, d \Re z \, d \Im z + \\
+ 24 \iint\limits_{ \mathbb{C} } e^{ - i \lambda z } \, \partial^2_{ z } v \, \psi\r_j \, d \Re z \, d \Im z
- 2 \iint\limits_{ \mathbb{C} } z e^{ - i \lambda z } \, v \, \partial_{ z } w \, \psi\r_j \, d \Re z \, d \Im z.
\end{multline*}

Thus it can be obtained that
\begin{multline}
\label{second_sum_a1}
J_{ 3 } + J_{ 4 } + J_{ 5 } + J_{ 6 } + J_{ 9 } + J_{ 10 } = \\
= 24 ( - i \lambda )^2 \iint\limits_{ \mathbb{C} } z e^{ - i \lambda z } \, \partial_{ z } v \, \psi\r_j \, d \Re z \, d \Im z
+ 24 ( - i \lambda ) \iint\limits_{ \mathbb{C} } z e^{ - i \lambda z } \, \partial_{ z }^2 v \, \psi\r_j \, d \Re z \, d \Im z + \\
+ 48 ( - i \lambda ) \iint\limits_{ \mathbb{C} } e^{ - i \lambda z } \, \partial_{ z } v \, \psi\r_j \, d \Re z \, d \Im z +
24 \iint\limits_{ \mathbb{C} } e^{ - i \lambda z } \, \partial^2_{ z } v \, \psi\r_j \, d \Re z \, d \Im z.
\end{multline}
Finally,
\begin{equation*}
\partial_t c\r_j( \lambda, t ) = 8 ( i \lambda )^3 c\r_j( \lambda, t ) - 24( - i \lambda )^2 a\r_j + \iint\limits_{ \mathbb{C} } z e^{ - i \lambda z } v( z, t ) f( z, \lambda, t ) d \Re z d \Im z.
\end{equation*}

In order to derive a similar formula for $ \partial_t d\r_1 $ we need to replace $ i \lambda $ by $ - i \bar \lambda $, $ - i \lambda z $ by $ i \bar \lambda \bar z$ and $ z $ by $ \bar z $ (whenever it appears as a multiplier in the integrand) everywhere in the derivation of the formula for $ \partial_t c\r_1 $.

\end{document}